\newcommand\version{August 28, 2019}
\newtheorem{theorem}{Theorem}
\newtheorem{proposition}[theorem]{Proposition}
\newtheorem{lemma}[theorem]{Lemma}
\newtheorem{corollary}[theorem]{Corollary}
\theoremstyle{definition}
\theoremstyle{remark}
\renewcommand{\epsilon}{\varepsilon}
\newcommand{\N}{\mathbb{N}}
\renewcommand{\phi}{\varphi}
\newcommand{\R}{\mathbb{R}}
\newcommand{\Sph}{\mathbb{S}}
\DeclareMathOperator{\spa}{span}
\begin{document}

\title[Non-degeneracy for the critical Lane--Emden system--\version]{Non-degeneracy for the\\ critical Lane--Emden system}

\author{Rupert L. Frank}
\address[R.~L.~Frank]{Mathematisches Institut, Ludwig-Maximilians-Universit\"at M\"unchen, Theresienstr. 39, 80333 M\"unchen, Germany, and Mathematics 253-37, Caltech, Pa\-sa\-dena, CA 91125, USA}
\email{r.frank@lmu.de, rlfrank@caltech.edu}

\author{Seunghyeok Kim}
\address[S.~Kim]{Department of Mathematics and Research Institute for Natural Sciences, College of Natural Sciences, Hanyang University, 222 Wangsimni-ro Seongdong-gu, Seoul 04763, Republic of Korea}
\email{shkim0401@hanyang.ac.kr, shkim0401@gmail.com}

\author{Angela Pistoia}
\address[A.~Pistoia]{Dipartimento SBAI,  ``Sapienza" Universit\`a di Roma, via Antonio Scarpa 16, 00161 Roma, Italy}
\email{angela.pistoia@uniroma1.it}

\begin{abstract}
We prove the non-degeneracy for the critical Lane--Emden system
$$ -\Delta U = V^p,\quad
-\Delta V = U^q,\quad
U, V > 0 \quad \text{in } \R^N
$$
for all $N \ge 3$ and $p,q > 0$ such that $\frac{1}{p+1} + \frac{1}{q+1} = \frac{N-2}{N}$.
We show that all solutions to the linearized system around a ground state must arise from the symmetries of the critical Lane--Emden system
provided that they belong to the corresponding energy space or they decay to 0 uniformly as the point tends to infinity.
\end{abstract}

\date{\today}
\subjclass[2010]{35J47, 35B40}
\keywords{Lane--Emden system, critical hyperbola, non-degenerate solution}
\thanks{R.~Frank was partially supported by US National Science Foundation grant DMS-1363432. S.~Kim was partially supported by Basic Science Research Program through the National Research Foundation of Korea (NRF) funded by the Ministry of Education (NRF2017R1C1B5076384). A. Pistoia was partially supported by Fondi di Ateneo ``Sapienza" Universit\`a di Roma (Italy).}
\maketitle

\maketitle

\renewcommand{\thefootnote}{${}$} \footnotetext{\copyright\, 2019 by
  the authors. This paper may be reproduced, in its entirety, for
  non-commercial purposes.}

\section{Introduction}
We consider the critical Lane--Emden system
\begin{equation}\label{eq-LEs}
\begin{cases}
-\Delta U = V^p &\text{in } \mathbb R^N,\\
-\Delta V = U^q &\text{in } \mathbb R^N,\\
U,V >0\ &\text{in } \mathbb R^N
\end{cases}
\end{equation}
where $N \ge 3$, $p,q>0$ and $(p,q)$ belongs to the {\em critical hyperbola}
\begin{equation}\label{eq-hyp}
\frac{1}{p+1} + \frac{1}{q+1} = \frac{N-2}{N}.
\end{equation}

In \cite[Corollary I.2]{Li}, Lions found a positive ground state
\[(U,V) \in \dot W^{2,\frac{p+1}{p}}(\R^N) \times\dot W^{2,\frac{q+1}{q}}(\R^N) \]
of \eqref{eq-LEs}, by transforming it into an equivalent scalar equation
\begin{equation}\label{eq-LEsc}
\Delta \left(|\Delta U|^{\frac{1}{p}-1} \Delta U\right) = |U|^{q-1}U \quad \text{in } \R^N
\end{equation}
and employing a concentration-compactness argument to the associated minimization problem
\begin{equation}\label{eq-K_pq}
\begin{aligned}
\inf \left\{\|\Delta u\|_{L^\frac{p+1}p(\R^N)}: \|u\|_{L^{q+1}(\R^N)} = 1 \right\}
&= \inf_{u \in \dot W^{2,\frac{p+1}{p}}(\R^N) \setminus \{0\}} \frac{\int_{\R^N} |\Delta u|^{\frac{p+1}p}}{(\int_{\R^N} |u|^{q+1})^{\frac{p+1}{p(q+1)}}}.
\end{aligned}
\end{equation}
As shown by Alvino et al.~\cite{AlLiTr} (see also \cite[Corollary I.2]{Li}), it is always radially symmetric and decreasing in $r = |x|$, after a suitable translation.
Moreover, Wang in \cite[Lemma ~3.2]{Wa} and Hulshof and Van der Vorst in \cite[Theorem ~1]{HuvdV} proved that a ground state solution of \eqref{eq-LEs} is unique up to scalings.

\medskip

The present paper deals with the non-degeneracy for the critical Lane--Emden system.
Let $(U,V)$ be a ground state solution to system \eqref{eq-LEs}.
The invariance of the system under dilations and translations leads to natural solutions of the linearized system around $(U,V)$. More precisely, the functions
$$ (U_{\delta,\xi}(x),V_{\delta,\xi}(x)) := \left(\delta^{\frac{2(p+1)}{pq-1}} U(\delta(x-\xi)), \delta^{\frac{2(q+1)}{pq-1}} V(\delta(x-\xi))\right) \quad \hbox{for any}\ \delta>0,\ \xi\in\R^N$$
are solutions to system \eqref{eq-LEs}. Hence, if we differentiate the system
$$\begin{cases}
-\Delta U_{\delta,\xi} = V^p_{\delta,\xi} &\text{in } \R^N,\\
-\Delta V_{\delta,\xi} = U^q_{\delta,\xi} &\text{in } \R^N
\end{cases}
$$
with respect to the parameters at $(\delta,\xi)=(1,0)$,
we immediately see that the $(N+1)$ linearly independent functions
\begin{equation}\label{z0}\left(\Psi_0(x), \Phi_0(x)\right):= \left(x\cdot\nabla U + \frac {2(p+1)}{pq-1} U, x\cdot\nabla V + \frac {2(q+1)}{pq-1} V\right)\end{equation}
and
\begin{equation}\label{zi}\left(\Psi_i(x), \Phi_i(x)\right):= \left(\frac{\partial U}{\partial x_i}, \frac{\partial V}{\partial x_i}\right)
\quad \text{for } i=1,\dots,N
\end{equation}
solve the linear system
\begin{equation}\label{eq-lin}\begin{cases}
-\Delta \Psi = p \, V^{p-1}\Phi &\text{in } \R^N,\\
-\Delta \Phi = q \, U^{q-1}\Psi &\text{in } \R^N.\\
\end{cases}
\end{equation}

A fundamental question regarding the linear system \eqref{eq-lin} is to classify all its solutions which vanish, in a certain sense, at infinity.
Notably, one can ask if all such solutions of \eqref{eq-lin} result from the invariance of \eqref{eq-LEs}.
Such a property, which we call the {\em non-degeneracy} for system \eqref{eq-LEs}, is a key ingredient
in analyzing the blow-up phenomena of solutions to various elliptic systems on bounded or unbounded domains in $\R^N$ or Riemannian manifolds whose asymptotic behavior is encoded in \eqref{eq-LEs}.
It also plays a crucial role in building new types of bubbling solutions to the Lane--Emden systems as well as their parabolic and hyperbolic counterparts.

In this paper, we provide an affirmative answer to the question mentioned earlier,
by proving the non-degeneracy for the critical Lane--Emden system \eqref{eq-LEs} for all dimensions $N \ge 3$ and all possible pairs $(p,q)$.

Here is the precise description of our main result.

\begin{theorem}\label{thm:main}
Suppose that $N \ge 3$, $p,q>0$, $(p,q)$ satisfies \eqref{eq-hyp}, and $(U,V)$ is a ground state solution to \eqref{eq-LEs}.
Then all the solutions $(\Psi,\Phi)\in \dot W^{2,\frac{p+1}{p}}(\R^N) \times \dot W^{2,\frac{q+1}{q}}(\R^N)$
to \eqref{eq-lin} are linear combinations of $\left(\Psi_i , \Phi_i \right),$ $i=0,1,\dots,N.$
\end{theorem}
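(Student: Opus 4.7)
The plan is to decouple the linearized system \eqref{eq-lin} via a spherical harmonic expansion on $\Sph^{N-1}$, reducing the problem to a countable family of radial ODE systems indexed by the angular quantum number $k=0,1,2,\ldots$. Writing
\begin{equation*}
\Psi(x) = \sum_{k,m}\psi_{k,m}(r)\, Y_{k,m}(\theta), \qquad \Phi(x) = \sum_{k,m}\phi_{k,m}(r)\, Y_{k,m}(\theta),
\end{equation*}
with $Y_{k,m}$ the standard spherical harmonics corresponding to the eigenvalues $\lambda_k = k(k+N-2)$ of $-\Delta_{\Sph^{N-1}}$, each radial pair $(\psi_k,\phi_k) := (\psi_{k,m},\phi_{k,m})$ satisfies
\begin{equation*}
-\psi_k'' - \tfrac{N-1}{r}\psi_k' + \tfrac{\lambda_k}{r^2}\,\psi_k = pV^{p-1}\phi_k,\qquad -\phi_k'' - \tfrac{N-1}{r}\phi_k' + \tfrac{\lambda_k}{r^2}\,\phi_k = qU^{q-1}\psi_k.
\end{equation*}
One verifies that $(\Psi_0,\Phi_0)$ lies purely in mode $k=0$, while each $(\Psi_i,\Phi_i)$, $i=1,\dots,N$, lies in mode $k=1$ with radial profile $(U'(r),V'(r))$. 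Hence Theorem \ref{thm:main} reduces to proving that, within the integrability class dictated by $\dot W^{2,\frac{p+1}{p}}(\R^N)\times \dot W^{2,\frac{q+1}{q}}(\R^N)$, the above radial system has a one-dimensional solution space for $k\in\{0,1\}$ and only the trivial solution for $k\ge 2$.

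To count solutions mode by mode I would exploit that the critical relation \eqref{eq-hyp} makes the system scale-invariant, so the Emden--Fowler substitution $r=e^t$ recasts each radial system as an autonomous linear system on $\R$ whose coefficients tend exponentially to explicit limits as $t\to\pm\infty$. Combined with the sharp asymptotics of the ground state available from \cite{HuvdV} and matched asymptotic expansions, a Frobenius analysis at $r=0$ and $r=\infty$ then produces the indicial exponents as roots of explicit characteristic polynomials in $N$, $p$, $q$, $\lambda_k$, and identifies, at each endpoint, a two-dimensional subspace of admissible solutions within the four-dimensional solution space. A Wronskian/shooting argument then yields the dimension of the intersection of these two subspaces. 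For $k\in\{0,1\}$ the explicit symmetries \eqref{z0}--\eqref{zi} already supply one solution in this intersection, so the remaining task is to rule out a second linearly independent matched solution.

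The principal obstacle is the regime $k\ge 2$, where no symmetry of \eqref{eq-LEs} provides a benchmark solution and the Frobenius data alone do not preclude matched integrable solutions. My favored route is to realize the mode-$k$ problem as the kernel equation for a compact, positivity-preserving operator $T_k$ obtained by inverting the radial Schr\"odinger-type operator $\mathcal L_k := \partial_r^2 + \tfrac{N-1}{r}\partial_r - \lambda_k/r^2$ and coupling the two components through the weights $pV^{p-1}, qU^{q-1}$, and then to establish a strict Perron--Frobenius-type monotonicity of its principal eigenvalue in $\lambda_k$. Since the translation modes already make $1$ a simple principal eigenvalue of $T_1$, such monotonicity would force the spectral radius of $T_k$ to drop strictly below $1$ for every $k\ge 2$, trivializing its kernel. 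The $k=0$ uniqueness would follow from the same spectral framework restricted to radial functions, combined with the radial rigidity of the ground state proved in \cite{Wa,HuvdV}.
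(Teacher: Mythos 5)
Your angular momentum decomposition and the reduction to radial ODE systems indexed by $k$ matches the paper's first step exactly, and you correctly identify that the task is to show the matched solution space is one-dimensional for $k=0,1$ and trivial for $k\geq 2$. From there, however, your strategy diverges from the paper's and, as written, leaves the hardest step unproved. The paper handles $\ell=0,1$ by an elementary monotonicity/shooting argument (Lemma~\ref{unique}): after subtracting the known solution one is left with a radial solution vanishing at the origin, and integrating the equations $(r^{N-1}\phi')' = -qr^{N-1}u^{q-1}\psi$, $(r^{N-1}\psi')' = -pr^{N-1}v^{p-1}\phi$ shows the first component is strictly monotone, hence not of finite energy. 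For $\ell\geq 2$ the paper uses a Pohozaev-type differential identity for $I_1(r)=r^{N-1}(v''\psi-v'\psi')$ and $I_2(r)=r^{N-1}(u''\phi-u'\phi')$ whose sum integrates to a sign-definite quantity when $\ell\geq 2$, and a sign analysis on the first nodal intervals of $\psi,\phi$ gives the contradiction. These are soft, self-contained ODE arguments.

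Your proposal, by contrast, rests on two ingredients you do not establish and which are genuinely problematic. For $k\geq 2$ you invoke a compact, positivity-preserving operator $T_k$ built from $(-\mathcal L_k)^{-1}$ and the weights $pV^{p-1}, qU^{q-1}$, and assert strict monotonicity of its principal eigenvalue in $\lambda_k$. For the coupled Lane--Emden system with $p\neq q$ the natural energy spaces are $\dot W^{2,(p+1)/p}\times \dot W^{2,(q+1)/q}$, which are not Hilbert spaces, and the linearized operator is not self-adjoint in any obvious $L^2$ pairing; the existence of a principal eigenvalue with a positive eigenvector and its strict monotonicity in $\lambda_k$ for such a coupled, non-self-adjoint operator on a noncompact domain is not standard and would require its own substantial proof (compactness of $T_k$, Krein--Rutman applicability, a variational or comparison characterization of the spectral radius). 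Moreover, for $k=0$ the Perron--Frobenius machinery breaks down because the scaling mode $(\Psi_0,\Phi_0)$ is sign-changing, and deriving one-dimensionality of the $k=0$ kernel from the nonlinear uniqueness of \cite{Wa,HuvdV} requires an implicit-function-theorem step that is itself nontrivial and not sketched. The Frobenius/Wronskian count for a four-dimensional solution space at $k=0,1$ is also only gestured at. So while your decomposition is correct and the spectral strategy is a known heuristic, the proposal currently has a gap precisely at the crux of the argument ($k\geq 2$), where the paper's integral identity \eqref{eq:poho} does the real work.
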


In fact, we may drop the condition $\Phi \in \dot W^{2,\frac{q+1}{q}}(\R^N)$ in the statement,
because the assumption that $\Psi \in \dot W^{2,\frac{p+1}{p}}(\R^N)$ implies this; see Subsection \ref{subsec:amd} for more comments.

In order to prove Theorem \ref{thm:main}, we perform an angular momentum decomposition.
Namely, we decompose the linear system \eqref{eq-lin} and its solutions into spherical harmonics.
Because our natural function space is not a Hilbert space such as $\dot W^{1,2}(\R^N)$, the step to determine relevant function spaces is somewhat tricky. Once it is done, we carefully study the corresponding radial parts by employing delicate ODE techniques.

\medskip

Furthermore, by using the precise decay estimate of a ground state solution to \eqref{eq-LEs} due to Hulshof and Van der Vorst \cite[Theorem 2]{HuvdV} and the maximum principle, one can prove the following lemma.

\begin{lemma}\label{decene}
Suppose that $N \ge 3$, $p,q>0$, $(p,q)$ satisfies \eqref{eq-hyp}, and $(U,V)$ is a ground state solution to \eqref{eq-LEs}. Let $(\Psi,\Phi)\in \dot W^{2,\frac{p+1}{p}}(\R^N) \times \dot W^{2,\frac{q+1}{q}}(\R^N)$ be a weak solution to \eqref{eq-lin} with
$$
\lim_{|x| \to \infty} (\Psi(x), \Phi(x)) = 0 \,.
$$
Then $(\Psi,\Phi)\in \dot W^{2,\frac{p+1}{p}}(\R^N) \times \dot W^{2,\frac{q+1}{q}}(\R^N)$.
\end{lemma}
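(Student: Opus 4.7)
I read the statement as asserting that any weak solution $(\Psi,\Phi)$ of \eqref{eq-lin} with $\Psi,\Phi\to 0$ uniformly at infinity must already lie in $\dot W^{2,(p+1)/p}(\R^N)\times\dot W^{2,(q+1)/q}(\R^N)$ (otherwise hypothesis and conclusion coincide). My plan has three stages.

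First, standard elliptic regularity applied to \eqref{eq-lin}---whose coefficients $pV^{p-1}$ and $qU^{q-1}$ are smooth positive functions on $\R^N$, because $U,V>0$ everywhere---promotes any weak solution to a classical one. Combined with the decay hypothesis, we get $\Psi,\Phi\in L^\infty(\R^N)\cap C^2(\R^N)$. Via the equation, membership in $\dot W^{2,(p+1)/p}$ is equivalent to $V^{p-1}\Phi\in L^{(p+1)/p}(\R^N)$ (and analogously for the other slot); the local parts of these integrals are finite because $\Psi,\Phi$ are bounded and $U^{q-1},V^{p-1}$ are locally $L^\infty$ (since $U,V$ are bounded and bounded away from zero on compacts). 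The whole task thus reduces to integrability at infinity.

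Second, to obtain polynomial decay of $(\Psi,\Phi)$, I would apply the comparison principle to the cooperative system $-\Delta|\Psi|\le pV^{p-1}|\Phi|$ and $-\Delta|\Phi|\le qU^{q-1}|\Psi|$ (Kato's inequality). The Hulshof--Van der Vorst precise two-sided asymptotics $U(x)\asymp|x|^{-\alpha}$, $V(x)\asymp|x|^{-\beta}$, where $\alpha+\beta=N-2$ on the critical hyperbola, feed into the construction of supersolutions on $\R^N\setminus\overline{B_R}$. A power-type ansatz $(h,k)=(A|x|^{-\gamma},B|x|^{-\delta})$ is natural; the scaling relations $\beta p=\alpha+2$, $\alpha q=\beta+2$, together with the critical-hyperbola identity $\alpha(q-1)+\beta(p-1)=4$, force the exponent matching $(\gamma,\delta)=(\alpha+\sigma,\beta+\sigma)$ for some $\sigma$. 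Calibrating $A,B$ against the boundary data $\|\Psi\|_{L^\infty(\partial B_R)}$, $\|\Phi\|_{L^\infty(\partial B_R)}$---which shrink to $0$ as $R\to\infty$ by the uniform decay hypothesis---and iterating the comparison, one obtains
\[
|\Psi(x)|\le C|x|^{-\alpha-\sigma},\qquad |\Phi(x)|\le C|x|^{-\beta-\sigma}\qquad\text{for }|x|\ge R_0,
\]
with some $\sigma>0$.

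Third, using the critical-hyperbola identity $\beta(p+1)=\alpha(q+1)=N$ (a restatement of $\alpha+\beta=N-2$ via the scaling relations), we compute
\[
\int_{|x|>R_0}\bigl|V^{p-1}\Phi\bigr|^{(p+1)/p}\,dx\le C\int_{R_0}^{\infty}r^{-1-\sigma(p+1)/p}\,dr<\infty,
\]
and analogously $\int_{|x|>R_0}|U^{q-1}\Psi|^{(q+1)/q}\,dx<\infty$. Together with the local bound, this gives the conclusion.

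The hard part is Step 2, namely extracting a \emph{strict} improvement $\sigma>0$ over the borderline ground-state decay rate. The obvious supersolution $(AU,BV)$ reduces to the algebraic conditions $A\ge pB$ and $B\ge qA$, which force $pq\le 1$, whereas $pq>1$ generically on the critical hyperbola. The power-function ansatz is only borderline at $\sigma=0$ (the constraint $(\alpha^2-\sigma^2)(\beta^2-\sigma^2)\ge pq(\alpha\beta)^2$ is saturated when $pq=1$ and $\sigma=0$, and strictly degrades for $\sigma\ne 0$). Hence the improvement must come from feeding the vanishing of the boundary trace $(\Psi,\Phi)|_{\partial B_R}$ as $R\to\infty$ into the supersolution constants---together with the two-sided (not merely one-sided) nature of the Hulshof--Van der Vorst asymptotics---via an iterated comparison or a rescaling/blow-up argument that closes the loop between boundary smallness and decay rate at infinity.
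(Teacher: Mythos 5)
Your overall strategy---comparison principle, iteration, Sobolev integrability at infinity---is the same as the paper's, but the specific mechanism you describe has two genuine errors that would prevent it from closing.

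\textbf{First, you misremember the Hulshof--Van der Vorst asymptotics.} You write $U\asymp|x|^{-\alpha}$, $V\asymp|x|^{-\beta}$ with $\alpha+\beta=N-2$ and the scaling relations $\beta p=\alpha+2$, $\alpha q=\beta+2$. Those relations characterize the \emph{scaling} exponents $\tfrac{2(p+1)}{pq-1}$, $\tfrac{2(q+1)}{pq-1}$, not the actual decay rates of the ground state. After normalizing $q\geq p$, the real decay is $v(r)\sim b_p\,r^{-(N-2)}$ in all cases, while $u(r)\sim a_p\,r^{-(p(N-2)-2)}$ when $p<\tfrac{N}{N-2}$ (with a $\log$ at $p=\tfrac{N}{N-2}$, and $r^{-(N-2)}$ above). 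So $\alpha+\beta=(p+1)(N-2)-4\neq N-2$ in general, and the identity $\alpha(q-1)+\beta(p-1)=4$ you invoke fails for the true rates---it is a \emph{strict inequality}, $(p(N-2)-2)(q-1)+(N-2)(p-1)>4$, which the paper isolates in a separate lemma precisely because that strict inequality is the engine that makes the iteration gain a little decay at every step.

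\textbf{Second, the improvement $\sigma>0$ you aim for is both unachievable and unnecessary.} Unachievable: the dilation mode $\Psi_0=x\cdot\nabla U+\tfrac{2(p+1)}{pq-1}U$ is an admissible solution that decays at infinity exactly like $U$, with $\sigma=0$; any proof of $\sigma>0$ for arbitrary decaying solutions would contradict this. Unnecessary: the integrability computation closes already at the borderline rate. Indeed with $|\Phi|\lesssim|x|^{-(N-2)+\eta}$ and $V\sim|x|^{-(N-2)}$ one finds $\int_{|x|>1}|V^{p-1}\Phi|^{(p+1)/p}\,dx<\infty$ as soon as $(N-2)(p+1)>N$, which follows from the hyperbola $\tfrac{1}{p+1}<\tfrac{N-2}{N}$. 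Because you set the wrong target, your ``hard part'' discussion spends its effort on the wrong obstruction and ends with only a sketch (``iterated comparison or rescaling/blow-up that closes the loop'') where the actual argument needs a concrete arithmetic input. The paper's mechanism is simpler than the coupled supersolution pair you propose: it alternates \emph{decoupled} one-equation comparisons---given $|\Psi|\lesssim|x|^{-\alpha}$ prove $|\Phi|\lesssim|x|^{-\nu}$ via a barrier $m_\nu|x|^{-\nu}$ for $\nu<\min\{N-2,(p(N-2)-2)(q-1)-2+\alpha\}$, then the symmetric implication in the other slot---and the strict inequality above guarantees the alternating sequence of exponents is increasing and converges to the ground-state rates. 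That arithmetic lemma, and the realization that only near-optimal (not super-optimal) decay is needed, are the missing pieces.
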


Combining this fact and Theorem \ref{thm:main}, we deduce the following result which, we believe, is also of practical use.

\begin{corollary}\label{cor:main}
Suppose that $N \ge 3$, $p,q>0$, $(p,q)$ satisfies \eqref{eq-hyp}, and $(U,V)$ is a ground state solution to \eqref{eq-LEs}.
Then all the weak solutions $(\Psi,\Phi)$  to \eqref{eq-lin} such that $\lim_{|x| \to \infty} (\Psi(x), \Phi(x)) = 0$
are linear combinations of $\left(\Psi_i , \Phi_i \right),$ $i=0,1,\dots,N.$
\end{corollary}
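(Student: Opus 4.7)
The plan is direct: Corollary \ref{cor:main} is a one-step deduction obtained by stringing together the two preceding results. Given any weak solution $(\Psi,\Phi)$ of the linearized system \eqref{eq-lin} satisfying $\lim_{|x|\to\infty}(\Psi(x),\Phi(x))=0$, Lemma \ref{decene} upgrades its regularity so that $(\Psi,\Phi) \in \dot W^{2,\frac{p+1}{p}}(\R^N) \times \dot W^{2,\frac{q+1}{q}}(\R^N)$. With this membership in hand, Theorem \ref{thm:main} applies verbatim and yields that
\[
(\Psi,\Phi) = \sum_{i=0}^{N} c_i\,(\Psi_i,\Phi_i)
\]
for some real coefficients $c_0,c_1,\ldots,c_N$, which is exactly the conclusion of the corollary.

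No further argument is required beyond this citation. The substantive content has been developed upstream: the classification of the kernel of the linearized operator is the subject of Theorem \ref{thm:main} itself and is carried out there through an angular momentum decomposition, a careful choice of the right function spaces (which are not of Hilbert type), and ODE analysis of the radial parts; the regularity upgrade is the content of Lemma \ref{decene} and uses the sharp pointwise decay of the ground state $(U,V)$ from \cite[Theorem 2]{HuvdV}, together with the maximum principle, to pass from uniform decay of $(\Psi,\Phi)$ at infinity to the appropriate integrability of $\Delta\Psi$ and $\Delta\Phi$ in $L^{\frac{p+1}{p}}(\R^N)$ and $L^{\frac{q+1}{q}}(\R^N)$.

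The only point requiring even minor attention is bookkeeping: one should confirm that the notion of weak solution used in the statement of Corollary \ref{cor:main} is compatible with the hypotheses imposed in Lemma \ref{decene}. Since the assumed uniform decay of $(\Psi,\Phi)$ combined with interior elliptic estimates applied to \eqref{eq-lin} places $\Psi$ and $\Phi$ in $L^\infty_{\loc}(\R^N)$ and makes the right-hand sides $pV^{p-1}\Phi$ and $qU^{q-1}\Psi$ locally integrable (in fact much more), this compatibility is immediate. I therefore do not expect any technical obstacle specific to Corollary \ref{cor:main}; all the genuine difficulty is concentrated in the proofs of Lemma \ref{decene} and, above all, Theorem \ref{thm:main}.
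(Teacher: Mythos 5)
Your proposal is correct and is exactly the paper's argument: the corollary is stated immediately after Lemma \ref{decene} as a direct combination with Theorem \ref{thm:main}, with no separate proof given. You also correctly read through the typo in the statement of Lemma \ref{decene} (as printed, its hypothesis already assumes $(\Psi,\Phi)\in\dot W^{2,\frac{p+1}{p}}\times\dot W^{2,\frac{q+1}{q}}$, which would make the conclusion vacuous; the intended hypothesis, and the one its proof actually uses, is just that $(\Psi,\Phi)$ is a weak solution decaying uniformly to zero), so your bookkeeping remark about the notion of weak solution is on point.
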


The rest of the paper is devoted to the proof of Theorem \ref{thm:main} and Lemma \ref{decene}.


\section{Proof of Theorem \ref{thm:main}}

\subsection{Angular momentum decomposition}\label{subsec:amd}

We write
$$
U(x) = u(|x|) \,,
\qquad
V(x) = v(|x|) \,,
$$
so that the PDE system \eqref{eq-LEs} becomes the ODE system
\begin{equation}
\label{eq:eqsystode}
-u'' - \frac{N-1}{r} u' = v^p \,,
\qquad
-v'' - \frac{N-1}{r} v' = u^q
\qquad\text{in}\ (0,\infty) \,.
\end{equation}
Moreover, since $U$ and $V$ are regular on $\R^N$, the values $u(0)$ and $v(0)$ are finite and
$$
u'(0) = v'(0)= 0 \,.
$$

Since $U$ and $V$ are radial, we can make a partial wave decomposition of \eqref{eq-lin}, that is, write
$$
\Psi(x) = \sum_{\ell=0}^\infty \sum_{m\in\mathcal M_{\ell,N}} \Psi_{\ell,m}(|x|) Y_{\ell,m}(x/|x|) \,,
\qquad
\Phi(x) = \sum_{\ell=0}^\infty \sum_{m\in\mathcal M_{\ell,N}} \Phi_{\ell,m}(|x|) Y_{\ell,m}(x/|x|) \,,
$$
where $Y_{\ell,m}$ is a basis of spherical harmonics in $L^2(\Sph^{N-1})$. The parameter $\ell\in\N_0$ is the degree of the spherical harmonic (`angular momentum' in physics terminology) and the parameter $m$ from the index set $\mathcal M_{\ell,N}$ labels the degeneracy. In the following it will only be important that
$$
\#\mathcal M_{0,N} = 1
\qquad\text{and}\qquad
\#\mathcal M_{1,N} = N \,,
$$
as well as that $Y_{0,0}$ is a constant function and that $\spa\{ Y_{1,m} :\ m\in\mathcal M_{1,N}\}$ coincides with the span of the coordinate functions $x_n/|x|$, $n=1,\ldots,N$. For each $\ell$ and $m$, the pair of functions $(\Psi_{\ell,m},\Phi_{\ell,m})$ satisfies the following equations, where, for simplicity, we write $(\psi,\phi)$ instead of $(\Psi_{\ell,m},\Phi_{\ell,m})$,
\begin{align}
\label{eq:eqlinsystode1}
-\psi'' - \frac{N-1}{r}\psi' + \frac{\ell(\ell+N-2)}{r^2}\psi & = p\, v^{p-1}\phi
\qquad\text{in}\ (0,\infty) \,, \\
\label{eq:eqlinsystode2}
-\phi'' - \frac{N-1}{r}\phi' + \frac{\ell(\ell+N-2)}{r^2}\phi & = q\, u^{q-1}\psi
\qquad\text{in}\ (0,\infty) \,.
\end{align}
We have
\begin{equation}
\label{eq:eqlinsystodebc}
\lim_{r\to 0} r^{-\ell} \psi(r)
\qquad\text{and}\qquad
\lim_{r\to 0} r^{-\ell} \phi(r)
\qquad\text{exist}
\end{equation}
and
\begin{equation}
\label{eq:eqlinsystodebc1}
\lim_{r\to 0} (r^{-\ell} \psi)'(r) =
\lim_{r\to 0} (r^{-\ell} \phi)'(r) =0 \,.
\end{equation}

Finally, let us comment on the relevant function spaces. We are concerned with solutions $\Psi\in\dot W^{2,\frac{p+1}{p}}(\R^N)$ and then \eqref{eq-lin} and $U,\Psi\in L^{q+1}(\R^N)$ (by Sobolev) implies that $\Phi\in \dot W^{2,\frac{q+1}{q}}(\R^N)$. Let us deduce corresponding properties of the $\Psi_{\ell,m}$ and $\Phi_{\ell,m}$. We denote by $\mathcal E_\ell^s$ the completion of $r^\ell C^2_c[0,\infty)$ with respect to
$$
\|f\|_{\mathcal E^s_\ell} := \left( \int_0^\infty \left|\left(f'' + \frac{N-1}{r}f' - \frac{\ell(\ell+N-2)}{r^2}f \right)\right|^s r^{N-1}\,dr \right)^{\frac 1s} \,.
$$
(We suppress $N$ from the notation of $\mathcal E_\ell^s$ for the sake of simplicity.) We claim that
$$
\Psi_{\ell,m}\in \mathcal E_\ell^\frac{p+1}{p} \,,
\qquad
\Phi_{\ell,m}\in \mathcal E_\ell^\frac{q+1}{q} \,.
$$
Indeed, if the $Y_{\ell,m}$ are normalized in $L^2(\Sph^{N-1})$, then
$$
\Psi_{\ell,m}(r) = \int_{\Sph^{N-1}} \overline{Y_{\ell,m}(\omega)} \Psi(r\omega)\,d\omega
$$
and
$$
\Psi_{\ell,m}'' + \frac{N-1}{r}\Psi_{\ell,m}' - \frac{\ell(\ell+N-2)}{r^2}\Psi_{\ell,m} = \int_{\Sph^{N-1}} \overline{Y_{\ell,m}(\omega)} (\Delta\Psi)(r\omega)\,d\omega \,.
$$
Thus, by H\"older's inequality on $\Sph^{N-1}$,
\begin{align*}
& \| \Psi_{\ell,m} \|_{\mathcal E_\ell^{\frac{p+1}{p}}} \leq \|\Delta\Psi\|_{L^\frac{p+1}{p}(\R^N)} \|Y_{\ell,m}\|_{L^{p+1}(\Sph^{N-1})} \,.
\end{align*}
A similar argument shows
\begin{align*}
& \| \Phi_{\ell,m} \|_{\mathcal E_\ell^{\frac{q+1}{q}}} \leq \|\Delta\Phi\|_{L^\frac{q+1}{q}(\R^N)} \|Y_{\ell,m}\|_{L^{q+1}(\Sph^{N-1})} \,.
\end{align*}

In view of the above fact, Theorem \ref{thm:main} is an immediate consequence of the following proposition.

\begin{proposition}\label{mainprop}
Let $(\psi,\phi)\in\mathcal E_\ell^{\frac{p+1}{p}}\times \mathcal E_\ell^{\frac{q+1}{q}}$ be a solution of \eqref{eq:eqlinsystode1}, \eqref{eq:eqlinsystode2} satisfying \eqref{eq:eqlinsystodebc} and \eqref{eq:eqlinsystodebc1}.
\begin{enumerate}
\item[(a)] If $\ell=0$, then $(\psi,\phi)$ is a multiple of $(ru' + \frac{2(p+1)}{pq-1} u, rv'+ \frac{2(q+1)}{pq-1} v)$.
\item[(b)] If $\ell=1$, then $(\psi,\phi)$ is a multiple of $(u',v')$.
\item[(c)] If $\ell\geq 2$, then $(\psi,\phi)\equiv 0$.
\end{enumerate}
\end{proposition}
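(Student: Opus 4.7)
My plan is to classify the solutions of \eqref{eq:eqlinsystode1}--\eqref{eq:eqlinsystode2} for each fixed $\ell$ by ODE methods, imposing the regularity \eqref{eq:eqlinsystodebc}--\eqref{eq:eqlinsystodebc1} at $r=0$ and membership in the energy class $\mathcal E_\ell^{(p+1)/p}\times\mathcal E_\ell^{(q+1)/q}$ at infinity. The system is two coupled second-order linear ODEs, so its local solution space on $(0,\infty)$ is four-dimensional. Near $r=0$ the coefficients $v^{p-1}$ and $u^{q-1}$ are bounded, so a Frobenius analysis yields characteristic exponents $\ell$ and $-(\ell+N-2)$ in each component; imposing \eqref{eq:eqlinsystodebc}--\eqref{eq:eqlinsystodebc1} then cuts the solution space down to a two-dimensional subspace of solutions with $\psi,\phi = O(r^\ell)$ as $r\to 0$.

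For parts (a) and (b), one direction in this two-dimensional regular-at-$0$ subspace is already spanned by the symmetry-generated solution appearing in \eqref{z0} and \eqref{zi}. To show that the energy class contains no other direction I would construct the second linearly independent regular-at-$0$ solution by a reduction-of-order / generalized-Wronskian computation based on the known solution, and then use the sharp pointwise decay of $(U,V)$ from Hulshof--Van der Vorst \cite{HuvdV} to show that this second solution grows too fast at infinity to belong to $\mathcal E_\ell^{(p+1)/p}\times\mathcal E_\ell^{(q+1)/q}$. Combined with the Frobenius count, this forces the energy-class solution space to be exactly one-dimensional.

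For part (c), with $\ell\ge 2$, no symmetry of \eqref{eq-LEs} gives a candidate mode, so the assertion is pure non-existence. My plan is to compare the $\ell$-mode equations with the $\ell=1$-mode equations solved by $(u',v')$. Since the ground state is radially strictly decreasing (by \cite{AlLiTr}), both $u'$ and $v'$ are strictly negative on $(0,\infty)$, so the substitutions $\psi = u'\,\widetilde\psi$ and $\phi = v'\,\widetilde\phi$ are well defined there. Inserting these into \eqref{eq:eqlinsystode1}--\eqref{eq:eqlinsystode2}, using the equations satisfied by $(u',v')$ in the $\ell=1$ mode, and integrating by parts against $r^{N-1}\,dr$ should yield a Picone-type identity whose right-hand side carries the strictly positive gap $[\ell(\ell+N-2)-(N-1)]\,r^{-2}\ge (N+1)\,r^{-2}$ for $\ell\ge 2$. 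The boundary terms at $0$ and at $\infty$ vanish thanks to \eqref{eq:eqlinsystodebc}--\eqref{eq:eqlinsystodebc1} and to membership in the energy class, forcing $(\psi,\phi)\equiv 0$.

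The main obstacle I anticipate is the bookkeeping of asymptotic rates throughout: the exponents controlling the decay of $u,v,u^{q-1},v^{p-1}$ depend non-trivially on $(p,q)$ along the critical hyperbola, and both the growth estimate for the second regular-at-$0$ solution in (a) and (b) and the vanishing of the boundary terms in the Picone-type identity in (c) have to be carried out uniformly across all admissible pairs $(p,q)$. A secondary difficulty is that the energy space is non-Hilbertian and the $\psi$- and $\phi$-components sit in different Lebesgue scales, so a single self-adjoint spectral framework is unavailable and pointwise ODE estimates have to take its place.
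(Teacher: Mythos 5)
Your plan deviates from the paper's proof in both halves, and in each case it hits a concrete obstacle that the paper's argument is specifically designed to circumvent.

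For parts (a) and (b), you propose to build the second regular-at-$0$ solution by reduction of order / a generalized Wronskian and then show it grows. For a \emph{scalar} second-order ODE with a known nowhere-vanishing solution $u_1$, reduction of order gives $u_2=u_1\int (p\,u_1^2)^{-1}$ explicitly; no such closed formula exists for a genuinely coupled $2\times 2$ system of second-order equations, because the fundamental system is $4$-dimensional and there is no scalar Wronskian to integrate. You would be left to estimate a fundamental matrix that you cannot write down, and the growth rates you need depend on $(p,q)$ along the whole critical hyperbola. The paper sidesteps exactly this: its Lemma~\ref{unique} shows directly, by integrating the equations in the form $(r^{N-1}\phi')'=-qr^{N-1}u^{q-1}\psi$ and $(r^{N-1}\psi')'=-pr^{N-1}v^{p-1}\phi$, that any solution with $\lim_{r\to 0}r^{-\ell}\psi=0$ and $\psi\not\equiv 0$ has $r^{-\ell}\psi$ strictly monotone, hence $\psi$ is strictly monotone and cannot lie in the energy class. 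No explicit second solution is ever constructed.

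For part (c), the Picone substitution $\psi=u'\widetilde\psi$, $\phi=v'\widetilde\phi$ does \emph{not} produce a sign-definite quadratic form: after the cancellations one is left with $u'v'$ times cross terms of the shape $pv^{p-1}(\widetilde\psi\widetilde\phi-\widetilde\psi^2)+qu^{q-1}(\widetilde\psi\widetilde\phi-\widetilde\phi^2)$, which has no sign, and the ``gap'' term $\bigl[\ell(\ell+N-2)-(N-1)\bigr]r^{-2}\bigl(u'\phi+v'\psi\bigr)$ is only sign-definite if $\psi$ and $\phi$ already have known signs. That is precisely what your sketch takes for granted and what the paper has to establish. The paper's Lemma~\ref{poho} shows $I_1'+I_2'=-\bigl[\ell(\ell+N-2)-(N-1)\bigr]r^{N-3}(u'\phi+v'\psi)$ with $I_1=r^{N-1}(v''\psi-v'\psi')$, $I_2=r^{N-1}(u''\phi-u'\phi')$, after which the bulk of the argument is a careful sign analysis: normalize $\psi>0$ near $0$, prove $\phi>0$ near $0$ via a local max/min argument using \eqref{eq:gspos}, define the first zeros $r_1,r_2$ of $\psi,\phi$, prove $\psi<0$ on $(r_1,r_2)$ (resp.\ $\phi<0$ on $(r_2,r_1)$), and integrate only up to $R=\min\{r_1,r_2\}$, where the sign of $u'\phi+v'\psi$ is controlled. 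Even the boundary term at infinity is handled more delicately than you suggest: the finite-energy hypothesis only yields $\liminf_{R\to\infty}|I_1(R)+I_2(R)|=0$ (via $\int_0^\infty(|I_1|+|I_2|)\,dr<\infty$), not a pointwise limit, and this $\liminf$ statement is all the contradiction needs in the case $r_1=r_2=\infty$. Without the $r_1,r_2$ sign analysis, the case where $\psi$ or $\phi$ changes sign is unaddressed in your proposal and the contradiction does not close.
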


We will prove the proposition in the following two subsections, which will deal with the two different aspects of this result. On the one hand, for $\ell=0,1$ we need to show that there are no other solutions than the known ones. This is proved using a uniqueness result in Lemma \ref{unique}. On the other hand, for $\ell\geq 2$ we need to prove that there are no non-trivial finite energy solutions at all. This is proved by adapting and completing an argument from \cite{LuWe} for the special case $p=1$, $q=\frac{N+4}{N-4}$.


\subsection{A uniqueness theorem}

Our first goal will be to prove the following uniqueness result.

\begin{lemma}\label{unique}
Let $(\psi,\phi)$ be a solution to \eqref{eq:eqlinsystode1}, \eqref{eq:eqlinsystode2} with $\lim_{r\to 0} r^{-\ell}\psi(r)=0$ and $\psi\not\equiv 0$. Then $r^{-\ell}\psi$ is strictly monotone. In particular, $\psi$ is strictly monotone and $\psi\not\in\mathcal E_\ell^{\frac{p+1}{p}}$.
\end{lemma}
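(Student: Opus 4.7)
I would start by killing the angular term in \eqref{eq:eqlinsystode1}--\eqref{eq:eqlinsystode2} via the substitution $\tilde\psi := r^{-\ell}\psi$, $\tilde\phi := r^{-\ell}\phi$, which recasts the system as the radial $(N+2\ell)$-dimensional equations
\begin{equation*}
(r^{N+2\ell-1}\tilde\psi')' = -p\,v^{p-1}\tilde\phi\,r^{N+2\ell-1}, \qquad (r^{N+2\ell-1}\tilde\phi')' = -q\,u^{q-1}\tilde\psi\,r^{N+2\ell-1}.
\end{equation*}
By hypothesis $\tilde\psi(0)=0$, while \eqref{eq:eqlinsystodebc}--\eqref{eq:eqlinsystodebc1} supply a finite $\tilde\phi(0)$ together with $\tilde\psi'(0)=\tilde\phi'(0)=0$. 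Since $\psi\not\equiv 0$, uniqueness for this regular-singular ODE system at $r=0$ forces $\tilde\phi(0)\neq 0$, and by the linearity of \eqref{eq:eqlinsystode1}--\eqref{eq:eqlinsystode2} I may normalize $\tilde\phi(0)>0$.

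\textbf{Monotonicity via propagation of signs.} The core step is to show $\tilde\psi'<0$ on $(0,\infty)$, by contradiction. Continuity of $\tilde\phi$ near the origin together with the integrated first equation immediately gives $\tilde\psi'<0$, hence $\tilde\psi<0$, for small positive $r$. Let $r_0:=\inf\{r>0:\tilde\psi'(r)=0\}$ and assume $r_0<\infty$. Then $\tilde\psi<0$ on $(0,r_0]$, so the right-hand side of the second ODE is strictly positive there; integrating from $0$ with the help of $\tilde\phi'(0)=0$ yields $\tilde\phi'>0$, whence $\tilde\phi>\tilde\phi(0)>0$, on $(0,r_0)$. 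Feeding these signs into the integrated first ODE gives
\begin{equation*}
r_0^{N+2\ell-1}\tilde\psi'(r_0) = -\int_0^{r_0} p\,v^{p-1}\tilde\phi\,r^{N+2\ell-1}\,dr < 0,
\end{equation*}
contradicting $\tilde\psi'(r_0)=0$. So $\tilde\psi$ is strictly decreasing on $[0,\infty)$, and since $\psi'=\ell r^{\ell-1}\tilde\psi+r^\ell\tilde\psi'$ has both summands negative for $r>0$, $\psi$ is strictly decreasing as well.

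\textbf{Ruling out the energy space.} Strict decrease and $\tilde\psi(0)=0$ produce $|\tilde\psi(r)|\geq|\tilde\psi(1)|>0$ for $r\geq 1$, hence the pointwise lower bound $|\psi(r)|\geq|\tilde\psi(1)|\,r^\ell$ on $[1,\infty)$. If $\psi$ belonged to $\mathcal E_\ell^{(p+1)/p}$, then reassembling with $Y_{\ell,m}$ would yield a function in $\dot W^{2,(p+1)/p}(\R^N)$, which by the Sobolev embedding associated to the critical hyperbola \eqref{eq-hyp} lies in $L^{q+1}(\R^N)$. This would require $\int_1^\infty|\psi|^{q+1}r^{N-1}\,dr<\infty$, while the lower bound gives
\begin{equation*}
\int_1^\infty|\psi|^{q+1}r^{N-1}\,dr \geq |\tilde\psi(1)|^{q+1}\int_1^\infty r^{\ell(q+1)+N-1}\,dr = \infty,
\end{equation*}
a contradiction.

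\textbf{Main obstacle.} The delicate point is the global monotonicity argument: one must use the coupling of the two ODEs to propagate the signs $\tilde\psi<0$ and $\tilde\phi>0$ past any candidate turning point of $\tilde\psi$, using only the initial data $\tilde\psi(0)=\tilde\psi'(0)=\tilde\phi'(0)=0$, $\tilde\phi(0)>0$ and the positivity of the coefficients $u^{q-1}$ and $v^{p-1}$. Once this is in place, the strict monotonicity of $\psi$ and the failure of $\psi\in\mathcal E_\ell^{(p+1)/p}$ drop out from elementary pointwise and Sobolev considerations.
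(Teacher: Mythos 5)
Your proof is correct and follows essentially the same strategy as the paper: use the substitution $\tilde\psi=r^{-\ell}\psi$, $\tilde\phi=r^{-\ell}\phi$ to reduce to a radial $(N+2\ell)$-dimensional system, then propagate signs of the coupled pair using the integrated forms of the two ODEs, and finally rule out the energy space by observing that a strictly monotone $\tilde\psi$ vanishing at $0$ is bounded away from zero at infinity, contradicting the Sobolev embedding.

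The small differences from the paper are structural rather than substantive. The paper first treats $\ell=0$ directly and only afterward reduces $\ell\geq 1$ to $\ell=0$ by the same substitution; you do the reduction up front, which is a bit cleaner. The paper fixes the sign of $\psi$ near $0$ (using that zeros of ODE solutions cannot accumulate) and then deduces $\phi(0)\leq 0$ from $\psi''(0)\geq 0$ via the limit of the equation at $r=0$; you instead normalize $\tilde\phi(0)>0$, justified by a uniqueness statement at the singular point $r=0$. Your version is a touch sharper (it gives $\tilde\phi(0)\neq 0$, not merely $\leq 0$), but the uniqueness you invoke is itself an ODE lemma that deserves a sentence of justification, e.g.\ via the integral formulation and a Gronwall iteration on a small interval, using the boundedness of $u^{q-1}$ and $v^{p-1}$ near $0$. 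Finally, you make explicit the Sobolev-embedding argument showing $\psi\not\in\mathcal E_\ell^{(p+1)/p}$, which the paper states in the lemma but leaves implicit; this is exactly inequality \eqref{eq:sob1}, so you could cite it directly.
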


Before giving the proof, let us apply it to prove the first half of Proposition \ref{mainprop}. We shall often use the fact (see, e.g., \cite[Cor.~I.2]{Li})
that
\begin{equation}
\label{eq:gspos}
u > 0
\qquad\text{and}\qquad
v > 0
\qquad\text{in}\ [0,\infty) \,.
\end{equation}
and
\begin{equation}
\label{eq:gsmono}
u' < 0
\qquad\text{and}\qquad
v'<0
\qquad\text{in}\ (0,\infty) \,.
\end{equation}

\begin{proof}[Proof of Proposition \ref{mainprop}. Parts (a) and (b).]
  Note that $(\Psi_0,\Phi_0)$ defined in \eqref{z0} corresponds to angular momentum $\ell=0$, while $(\Psi_i,\Phi_i)$ for $i=1,\dots,N$ correspond to angular momentum $\ell=1$. By the above discussion, this means that $(ru' + \frac{2(p+1)}{pq-1} u, rv'+ \frac{2(q+1)}{pq-1} v)$ is a solution of \eqref{eq:eqlinsystode1}, \eqref{eq:eqlinsystode2} with $\ell=0$ and $(u',v')$ is a solution of \eqref{eq:eqlinsystode1}, \eqref{eq:eqlinsystode2} for $\ell=1$. Moreover, since the right sides of the equations satisfy the corresponding integrability conditions, we have $(ru' + \frac{2(p+1)}{pq-1} u, rv'+ \frac{2(q+1)}{pq-1} v)\in \mathcal E_0^{\frac{p+1}{p}}\times \mathcal E_0^{\frac{q+1}{q}}$ and $(u',v')\in \mathcal E_1^{\frac{p+1}{p}}\times \mathcal E_1^{\frac{q+1}{q}}$.

It remains to be proved that these are the only solutions. We first assume that $\ell=0$. If $(\psi,\phi)\in\mathcal E_0^{\frac{p+1}{p}}\times \mathcal E_0^{\frac{q+1}{q}}$ is a solution of \eqref{eq:eqlinsystode1}, \eqref{eq:eqlinsystode2}, then
$$
\left(\psi,\phi \right) - \frac{pq-1}{2(p+1)}\,\frac{\psi(0)}{u(0)}\, \left(ru' + \frac{2(p+1)}{pq-1} u,\ rv'+ \frac{2(q+1)}{pq-1} v \right)
$$
is a solution of \eqref{eq:eqlinsystode1}, \eqref{eq:eqlinsystode2} in $\mathcal E_0^{\frac{p+1}{p}}\times \mathcal E_0^{\frac{q+1}{q}}$ whose first component vanishes at zero. (Note that here we use $u(0)\neq 0$ which follows from \eqref{eq:gspos}.) Thus, by Lemma \ref{unique}, the above solution vanishes identically, which means that $(\psi,\phi)$ is a multiple of $(ru' + \frac{2(p+1)}{pq-1} u, rv'+ \frac{2(q+1)}{pq-1} v)$.

The proof for $\ell=1$ is similar, except that now we use the fact that $u'(0)=0$ and, by equation \eqref{eq:eqsystode} and \eqref{eq:gspos},
$$
u''(0) = - N^{-1} v(0)^p \neq 0 \,.
$$
Thus, if $(\psi,\phi)\in\mathcal E_1^{\frac{p+1}{p}}\times \mathcal E_1^{\frac{q+1}{q}}$ is a solution of \eqref{eq:eqlinsystode1}, \eqref{eq:eqlinsystode2}, then
$$
(\psi,\phi) - \frac{\psi'(0)}{u''(0)}\, (u', v')
$$
is a solution in $\mathcal E_1^{\frac{p+1}{p}}\times \mathcal E_1^{\frac{q+1}{q}}$ whose first component is $o(r)$ at the origin. Thus, by the lemma the above solution vanishes identically, which means that $(\psi,\phi)$ is a multiple of $(u',v')$. This completes the proof of parts (a) and (b) in the proposition.
\end{proof}

\begin{proof}[Proof of Lemma \ref{unique}]
\emph{Step 1. The case $\ell=0$.} Since zeros of solutions of ordinary differential equations cannot accumulate at a finite point, we know that $\psi$ is either positive or negative in a right neighborhood of zero. By multiplying both $\psi$ and $\phi$ by $-1$ if necessary, we may assume that $\psi$ is positive in a right neighborhood of zero. Let
$$
R:= \sup\{ r>0 :\ \psi>0 \ \text{in}\ (0,r) \} \,,
$$
so, by assumption, $R>0$. We will show that $\psi$ is strictly increasing in $(0,R)$. Note that this implies, in particular, that $R=\infty$, because otherwise we had $\psi(R)=0$ and then $0=\psi(R)-\psi(0)=\int_0^R \psi'(r)\,dr >0$, a contradiction.

Writing the equation for $\phi$ as $(r^{N-1}\phi')' = - q r^{N-1} u^{q-1}\psi$ and using the fact that $\lim_{r\to 0} r^{N-1}\phi'(r)=0$, we obtain
$$
r^{N-1} \phi'(r) = -q \int_0^r u(s)^{q-1}\psi(s) s^{N-1}\,ds \,.
$$
By \eqref{eq:gspos}, this proves that $\phi'<0$ in $(0,R)$. We now deduce from the equation for $\psi$ that
$$
- N \psi''(0) = \lim_{r\to 0} \left(-\psi''(r) - \frac{N-1}{r}\psi'(r)\right) = \lim_{r\to 0} p\, v(r)^{p-1}\phi(r) = p\, v(0)^{p-1}\phi(0) \,.
$$
Since $v(0)>0$ (again from \eqref{eq:gspos}) and $\psi''(0)\geq 0$ (this follows from the fact that $\psi$ is positive in a right neighborhood of zero and $\psi(0)=\psi'(0)=0$), we conclude that $\phi(0)\leq 0$. This, together with the fact that $\phi'<0$ in $(0,R)$ implies that $\phi<0$ in $(0,R)$.

Now writing the equation for $\psi$ as $(r^{N-1}\psi')' = -p r^{N-1} v^{p-1}\phi$ and using the fact that $\lim_{r\to 0} r^{N-1}\psi'(r)=0$, we obtain
$$
r^{N-1} \psi'(r) = - p \int_0^r v(s)^{p-1}\phi(s) s^{N-1}\,ds \,.
$$
By \eqref{eq:gspos} and $\phi<0$ in $(0,R)$, we conclude that $\psi'>0$ in $(0,R)$, as claimed.

\medskip

\emph{Step 2. The case $\ell\geq 1$.} We use a standard trick to reduce the case $\ell\geq 1$ to the case $\ell=0$ by increasing $N$. Let $\tilde\psi(r) := r^{-\ell}\psi(r)$ and $\tilde\phi(r):=r^{-\ell}\phi(r)$ and note that
\begin{align}
\label{eq:eqlinsystodealt}
-\tilde\psi'' - \frac{N+2\ell-1}{r}\tilde\psi' & = p\, v^{p-1}\tilde\phi
\qquad\text{in}\ (0,\infty) \,. \\
-\tilde\phi'' - \frac{N+2\ell-1}{r}\tilde\phi' & = q\, u^{q-1}\tilde\psi
\qquad\text{in}\ (0,\infty) \,.
\end{align}
Moreover, we have
$$
\tilde\psi'(0) = \tilde\phi'(0)
\qquad\text{and}\qquad
\lim_{r\to 0} \tilde\phi(r)
\ \text{exists}
$$
and, by the assumption of the lemma,
$$
\tilde\psi(0)=0 \,.
$$
Therefore, from Step 1 we infer that $\tilde\psi= r^{-\ell}\psi$ is strictly monotone. Since it vanishes at the origin, this implies, in particular, that $r^{-\ell} \psi$ has the same sign as $(r^{-\ell}\psi)'$. Thus, $\psi' = r^\ell (r^{-\ell}\psi)' + \ell r^{-1} \psi$ has a fixed sign, which means that $\psi$ is strictly monotone, as claimed.
\end{proof}


\subsection{An identity for solutions and its consequences}

The following two functions will play an important role in what follows,
$$
I_1(r) := r^{N-1} (v''(r)\psi(r) - v'(r)\psi'(r)) \,,
\qquad
I_2(r) := r^{N-1} (u''(r)\phi(r) - u'(r)\phi'(r)) \,.
$$
In the next lemma we compute their derivatives and prove an integral representation for their sum.

\begin{lemma}\label{poho}
For any $r>0$,
\begin{align}
\label{eq:i1der}
I_1'(r) & = r^{N-1} \left( -q u^{q-1}u'\psi + p v^{p-1} v'\phi - \frac{\ell(\ell+N-2)-(N-1)}{r^2}v'\psi \right), \\
\label{eq:i2der}
I_2'(r) & = r^{N-1} \left( - p v^{p-1} v'\phi + q u^{q-1}u'\psi - \frac{\ell(\ell+N-2)-(N-1)}{r^2}u'\phi \right).
\end{align}
In particular, for any $R>0$,
\begin{equation}
\label{eq:poho}
I_1(R) + I_2(R) = - \int_0^R \frac{\ell(\ell+N-2)-(N-1)}{r^2} ( u'(r)\phi(r)+ v'(r)\psi(r))r^{N-1}\,dr \,.
\end{equation}
\end{lemma}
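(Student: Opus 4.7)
The plan is a direct computation. I would prove \eqref{eq:i1der} by differentiating $I_1$ via Leibniz's rule, then use the ODEs \eqref{eq:eqsystode} and \eqref{eq:eqlinsystode1} to eliminate the second-order derivatives $v''$, $\psi''$ and the third-order derivative $v'''$ that arise. The formula \eqref{eq:i2der} then follows by the symmetric computation exchanging $u \leftrightarrow v$, $p \leftrightarrow q$, $\psi \leftrightarrow \phi$. Finally, \eqref{eq:poho} follows by summation, cancellation of the nonlinear cross terms, and integration from $0$ to $R$.

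More concretely, Leibniz's rule gives
\[
I_1'(r) = (N-1)r^{N-2}(v''\psi - v'\psi') + r^{N-1}(v'''\psi - v'\psi'') \,,
\]
where the initial cancellation of $\pm v''\psi'$ inside the second parenthesis is built into the definition of $I_1$. Differentiating the $v$-equation in \eqref{eq:eqsystode} yields $v''' = \frac{N-1}{r^2}v' - \frac{N-1}{r}v'' - qu^{q-1}u'$, while \eqref{eq:eqlinsystode1} gives $\psi'' = -\frac{N-1}{r}\psi' + \frac{\ell(\ell+N-2)}{r^2}\psi - pv^{p-1}\phi$. Substituting both expressions, the $(N-1)r^{N-2}v''\psi$ contributions cancel against the ones produced by $v'''$, and the $(N-1)r^{N-2}v'\psi'$ contributions cancel against the ones produced by $\psi''$; what remains is precisely \eqref{eq:i1der}.

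Once \eqref{eq:i1der} and \eqref{eq:i2der} are in hand, their sum simplifies: the nonlinear terms $\mp qu^{q-1}u'\psi$ and $\pm pv^{p-1}v'\phi$ cancel in pairs, leaving $(I_1+I_2)'(r) = -\frac{\ell(\ell+N-2)-(N-1)}{r^2}(u'\phi + v'\psi)\,r^{N-1}$. Integrating from $0$ to $R$ and using $I_1(0) = I_2(0) = 0$ (justified below) yields \eqref{eq:poho}.

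To verify the boundary values at zero, I combine the regularity of $u$ and $v$ at the origin, which together with \eqref{eq:eqsystode} and $u'(0) = v'(0) = 0$ forces $u', v' = O(r)$ and $u'', v''$ bounded near $r = 0$, with \eqref{eq:eqlinsystodebc} and \eqref{eq:eqlinsystodebc1}. The latter yield $\psi(r) = A r^\ell + o(r^\ell)$ and $\psi'(r) = \ell A r^{\ell-1} + o(r^{\ell-1})$ for some constant $A$ (with the case $\ell = 0$ read as $\psi$ bounded and $\psi'(0) = 0$), and analogous statements for $\phi$. Consequently each of $r^{N-1}v''\psi$, $r^{N-1}v'\psi'$, $r^{N-1}u''\phi$, $r^{N-1}u'\phi'$ is $O(r^{N+\ell-1})$, which tends to $0$ since $N \ge 3$ and $\ell \ge 0$. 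The main obstacle is purely algebraic bookkeeping: the specific combination $v''\psi - v'\psi'$ used to define $I_1$ is chosen precisely so that the substitutions above produce the clean cancellations, and one must verify these line up correctly.
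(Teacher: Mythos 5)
Your computation is correct and follows essentially the same route as the paper: differentiate $I_1$, use the differentiated ground-state ODE together with the linearized ODE for $\psi$ to eliminate $v'''$ and $\psi''$, collect terms, and then sum and integrate. Your explicit justification that $I_1(0)=I_2(0)=0$ (via $u',v'=O(r)$, $u'',v''$ bounded, and $\psi,\phi=O(r^\ell)$, $\psi',\phi'=O(r^{\ell-1})$ from \eqref{eq:eqlinsystodebc}--\eqref{eq:eqlinsystodebc1}) is a detail the paper leaves implicit, so this is a slight strengthening rather than a divergence.
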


In the proof of part (a) of Proposition \ref{mainprop} we have already shown that $(u',v')$ solves \eqref{eq:eqlinsystode1}, \eqref{eq:eqlinsystode2} with $\ell=1$. For easier reference we record the equation (which is obtained from \eqref{eq:eqsystode} by differentiation),
\begin{align}
\label{eq:eqlinsystodezm1}
-u''' - \frac{N-1}{r}u'' + \frac{N-1}{r^2}u' & = p v^{p-1}v'
\qquad\text{in}\ (0,\infty) \,, \\
\label{eq:eqlinsystodezm2}
-v''' - \frac{N-1}{r}v'' + \frac{N-1}{r^2}v' & = q u^{q-1}u'
\qquad\text{in}\ (0,\infty) \,.
\end{align}

\begin{proof}[Proof of Lemma \ref{poho}]
Using the equations \eqref{eq:eqlinsystodezm2} and \eqref{eq:eqlinsystode1} for $v'$ and $\psi$,
\begin{align*}
I_1'(r) & = r^{N-1} \left( (v''' + \frac{N-1}{r}v'')\psi - v'(\psi''+ \frac{N-1}{r}\psi') \right) \\
& = r^{N-1} \left( (- q u^{q-1}u' + \frac{N-1}{r^2}v') \psi  - v'( -p v^{p-1}\phi + \frac{\ell(\ell+N-2)}{r^2} \psi) \right) \\
& = r^{N-1} \left( -q u^{q-1}u'\psi + p v^{p-1} v'\phi - \frac{\ell(\ell+N-2)-(N-1)}{r^2}v'\psi \right).
\end{align*}
Similarly, using the equations \eqref{eq:eqlinsystodezm1} and \eqref{eq:eqlinsystode2} for $u'$ and $\phi$,
\begin{align*}
I_2'(r) & = r^{N-1} \left( (u'''+\frac{N-1}{r}u'')\phi - u' (\phi'' + \frac{N-1}{r}\phi') \right) \\
& = r^{N-1} \left( (-p v^{p-1}v' + \frac{N-1}{r^2} u')\phi - u' (-q u^{q-1}\psi + \frac{\ell(\ell+N-2)}{r^2}\phi) \right) \\
& = r^{N-1} \left( - p v^{p-1} v'\phi + q u^{q-1}u'\psi - \frac{\ell(\ell+N-2)-(N-1)}{r^2}u'\phi \right).
\end{align*}
This proves the first two formulas in the lemma. To prove the third one, we add the first two and integrate them between $0$ and $R$.
\end{proof}

We finally turn to the

\begin{proof}[Proof of Proposition \ref{mainprop}. Part (c)]
Our goal is to prove that if $(\psi,\phi)$ solves \eqref{eq:eqlinsystode1}, \eqref{eq:eqlinsystode2} for $\ell\geq 2$, then $(\psi,\phi)\equiv 0$. To prove this, it suffices to show that $\psi\equiv 0$. To prove the latter, we argue by contradiction and assume $\psi\not\equiv 0$. As in the proof of Lemma \ref{unique}, we may assume, without loss of generality, that $\psi$ is positive in a right neighborhood of zero. Let
$$
r_1 :=\sup\{ r>0 :\ \psi>0 \ \text{in}\ (0,r) \} \,,
$$
so, by assumption, $r_1>0$. Moreover, if $r_1<\infty$, then
\begin{equation}
\label{eq:psibdry}
\psi(r_1) = 0
\qquad\text{and}\qquad
\psi'(r_1)\leq 0 \,.
\end{equation}

We claim that $\phi$ is positive in a right neighborhood of zero. Since $\phi(0)=0$ (because $\ell>0$), this is a consequence of the following two facts,
\begin{enumerate}
\item $\phi$ has no negative local minimum in $(0,r_1)$.
\item $\phi$ takes a positive value in $(0,r_1)$.
\end{enumerate}
Item (1) follows from the equation \eqref{eq:eqlinsystode2} for $\phi$, since by \eqref{eq:gspos} at a negative local minimum the left side would be negative whereas the right side is positive in $(0,r_1)$. To prove item (2) note that $\psi$ has a positive local maximum in $(0,r_1)$ (since $\psi(0)=0$ and $\lim_{r\to r_1}\psi(r)=0$). Evaluating the equation for $\psi$ at this point, we see that the left side is positive and therefore so is the right side. Thus, by \eqref{eq:gspos}, $\phi$ at this point is positive.

Because of the preceding arguments
$$
r_2:= \sup\{ r>0:\ \phi>0 \ \text{in}\ (0,r)\}
$$
is positive. Clearly, if $r_2<\infty$, then
\begin{equation}
\label{eq:phibdry}
\phi(r_2) = 0
\qquad\text{and}\qquad
\phi'(r_2)\leq 0 \,.
\end{equation}

Next, we show that
\begin{equation}
\label{eq:psiphioutside}
\psi<0 \ \text{in}\ (r_1,r_2) \ \text{if}\ r_1<r_2 \,,
\qquad
\phi<0 \ \text{in}\ (r_2,r_1) \ \text{if}\ r_2<r_1 \,.
\end{equation}
We begin with the second assertion. We first argue that $\phi$ is negative in a right neighborhood of $r_2$. Recall from \eqref{eq:phibdry} that $\phi'(r_2)\leq 0$. The negativity in a right neighborhood is clear if $\phi'(r_2)<0$, while if $\phi'(r_2)=0$, the equation \eqref{eq:eqlinsystode2} for $\phi$ evaluated at $r_2$, together with the fact that $\psi(r_2)>0$, implies $\phi''(r_2)<0$, which again implies the negativity in a right neighborhood. The negativity in the whole interval $(r_2,r_1)$ now follows from item (1) above. The assertion for $\psi$ follows from the same arguments.

\medskip

After these preliminaries we now turn to the main part of the proof of part (c) of Proposition \ref{mainprop}. We first assume that $\min\{r_1,r_2\}<\infty$ and choose $R=\min\{r_1,r_2\}$ in identity \eqref{eq:poho} in Lemma \ref{poho}. Note that with this choice, using $\ell\geq 2$ and \eqref{eq:gsmono},
$$
\int_0^R \frac{\ell(\ell+N-2)-(N-1)}{r^2} ( u'(r)\phi(r)+ v'(r)\psi(r))r^{N-1}\,dr <0 \,.
$$
We now show that for the above choice of $R$,
\begin{equation}
\label{eq:iproof}
I_1(R) + I_2(R) \leq 0 \,,
\end{equation}
which will lead to the desired contradiction.

It is easy to see that
\begin{equation}
\label{eq:iproof1}
I_1(R) \leq 0
\qquad\text{if}\ R=r_1
\qquad\text{and}\qquad
I_2(R) \leq 0
\qquad\text{if}\ R=r_2 \,.
\end{equation}
Indeed, if $R=r_1$, then, by \eqref{eq:gsmono} and \eqref{eq:psibdry}, $I_1(R) = - r_1^{N-1} v'(r_1)\psi'(r_1)\leq 0$ and if $R=r_2$, then, by \eqref{eq:phibdry}, $I_2(R) = -r_2^{N-1}u'(r_2)\phi'(r_2) \leq 0$.

We now show that
\begin{equation}
\label{eq:iproof2}
I_2(R) \leq 0
\qquad\text{if}\ R=r_1
\qquad\text{and}\qquad
I_1(R) \leq 0
\qquad\text{if}\ R=r_2 \,.
\end{equation}
Note that in case $r_1=r_2$ this follows from the previous assertion, so we may assume that $r_1\neq r_2$ (and we continue to assume that $\min\{r_1,r_2\}<\infty$). In order to prove the first assertion in \eqref{eq:iproof2}, let $r_1<r_2$. Using \eqref{eq:psiphioutside}, \eqref{eq:gsmono} and $\ell\geq 2$, we see that each one of the three terms in the parenthesis on the right side of \eqref{eq:i2der} is positive in $(r_1,r_2)$. Thus, $I_2'>0$ in $(r_1,r_2)$ and therefore
$$
I_2(r_1)<I_2(r_2)\leq 0
$$
where the second inequality follows from \eqref{eq:iproof1}.

Similarly, in order to prove the second assertion in \eqref{eq:iproof2}, let $r_2<r_1$. Using \eqref{eq:psiphioutside}, \eqref{eq:gsmono} and $\ell\geq 2$, we see that each one of the three terms in the parenthesis on the right side of \eqref{eq:i1der} is positive in $(r_2,r_1)$. Thus, $I_1'>0$ in $(r_2,r_1)$ and therefore
$$
I_1(r_2)<I_1(r_1)\leq 0
$$
where the second inequality follows from \eqref{eq:iproof1}.

This completes the proof of \eqref{eq:iproof2} and therefore the proof of \eqref{eq:iproof}.

\medskip

We still need to deal with the case $\min\{r_1,r_2\}=\infty$, that is, $r_1=r_2=\infty$. We let $R\to\infty$ in \eqref{eq:poho}. Since the integrand on the right side is negative, the left side converges as $R\to\infty$ either to $+\infty$ or to a positive number. The following lemma implies that the left side converges, along a subsequence, to $0$, which is again a contradiction and concludes the proof of part (c) of Proposition \ref{mainprop}.
\end{proof}

In the previous proof we used the following fact.

\begin{lemma}
If $(\psi,\phi)\in\mathcal E^\frac{p+1}{p}\times\mathcal E^\frac{q+1}{q}$, then
$$
\liminf_{R\to\infty} |I_1(R) + I_2(R)|= 0 \,.
$$
\end{lemma}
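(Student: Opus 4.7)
My plan is to prove the stronger statement
\[
\int_1^\infty \frac{|I_1(r)+I_2(r)|}{r}\,dr<\infty,
\]
from which the lemma is immediate: if the conclusion failed, there would exist $c>0$ and $R_0$ with $|I_1(r)+I_2(r)|\geq c/2$ for all $r\geq R_0$, and the integral would diverge as $(c/2)\int_{R_0}^\infty dr/r$.

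To bound $|I_1+I_2|/r$ I would eliminate the second derivatives in the definitions of $I_1$ and $I_2$ using the Lane--Emden equations \eqref{eq:eqsystode}, i.e.\ $v''=-u^q-(N-1)v'/r$ and $u''=-v^p-(N-1)u'/r$, to write
\[
I_1(r)+I_2(r)=-r^{N-1}(u^q\psi+v^p\phi)-(N-1)r^{N-2}(v'\psi+u'\phi)-r^{N-1}(v'\psi'+u'\phi').
\]
After dividing by $r$ and applying the triangle inequality I obtain six non-negative integrals, each of which I would handle by H\"older's inequality in the measure $r^{N-1}\,dr$: the factor $\psi,\phi,\psi'$ or $\phi'$ is placed in the Sobolev space provided by the energy assumption on $\mathcal E_\ell^{(p+1)/p}$ or $\mathcal E_\ell^{(q+1)/q}$ (namely $\psi\in L^{q+1}(r^{N-1}dr)$, $\phi\in L^{p+1}(r^{N-1}dr)$, together with their first-order Sobolev analogues for $\psi'$ and $\phi'$), while the remaining $u^q,v^p,|v'|$ or $|u'|$ factor, appropriately weighted by a negative power of $r$, is placed in the conjugate space. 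The weighted norms of the $u,v$ factors are finite thanks to the sharp Hulshof--Van der Vorst decay \cite[Thm.~2]{HuvdV}, namely $u(r)\sim c\,r^{-N/(q+1)}$ and $v(r)\sim c\,r^{-N/(p+1)}$ at infinity, together with the smoothness of $u,v$ at the origin. After some algebra the six integrability conditions at infinity reduce, using the critical hyperbola \eqref{eq-hyp} (equivalently $pq-1=2(p+1)(q+1)/N$), to trivial positivity statements such as $pq>1$ and $(p+1)(q+1)>0$.

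The Sobolev embeddings for the abstract space $\mathcal E_\ell^s$ used above can be obtained by viewing $(\psi,\phi)$ as the $(\ell,m)$-component of the ambient $\R^N$ solution $(\Psi,\Phi)\in\dot W^{2,(p+1)/p}(\R^N)\times\dot W^{2,(q+1)/q}(\R^N)$ and projecting the standard $\R^N$ Sobolev inequalities against the spherical harmonic $Y_{\ell,m}$, in the spirit of Subsection~\ref{subsec:amd}. The main technical hurdle is the handling of the last two of the six integrals, those containing $\psi'$ and $\phi'$, which require the first-order (rather than just the zeroth-order) Sobolev embedding and a more careful matching of H\"older exponents against the decay of $|v'|/r$ and $|u'|/r$.
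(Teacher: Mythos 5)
Your proposal is correct in outline but takes a genuinely different route from the paper's, and it contains one factual error worth flagging. The paper proves the stronger assertion $\int_0^\infty(|I_1|+|I_2|)\,dr<\infty$ \emph{without} the extra $1/r$ weight and \emph{without} any pointwise decay input: after eliminating the second derivatives exactly as you do, it places \emph{both} factors of each product in Lebesgue spaces coming from the energy hypotheses and from the ground state's membership in $\dot W^{2,(p+1)/p}\times\dot W^{2,(q+1)/q}$, using only the Sobolev inequality (for the $u^q\psi$-type terms), the Hardy inequality (for the $r^{-1}v'\psi$-type terms), and two first-order Sobolev embeddings whose conjugate exponents match exactly, $\tfrac1s+\tfrac1t=1$, thanks to the critical hyperbola (for the $v'\psi'$-type terms). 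Your variant instead places only the $\psi,\phi,\psi',\phi'$ factors in energy-derived Lebesgue spaces and controls the $u,v,u',v'$ factors by the Hulshof--Van der Vorst pointwise decay; the $1/r$ weight is then genuinely needed, because with only the scaling-rate bound on $u$ the unweighted integral $\int u^{q+1}r^{N-1}\,dr$ is borderline divergent. The error: you assert $u(r)\sim c\,r^{-N/(q+1)}$ and $v(r)\sim c\,r^{-N/(p+1)}$, but these are the \emph{scaling} rates, not the actual asymptotics. By \cite[Thm.~2]{HuvdV} (reproduced as \eqref{eq_dec}), $v(r)\sim b\,r^{-(N-2)}$ always, and $u(r)\sim a\,r^{-(p(N-2)-2)}$ for $p<\tfrac{N}{N-2}$, $\sim a\,r^{-(N-2)}\log r$ at $p=\tfrac{N}{N-2}$, and $\sim a\,r^{-(N-2)}$ for $p>\tfrac{N}{N-2}$; since one checks $N-2>\tfrac{N}{p+1}$ and $p(N-2)-2>\tfrac{N}{q+1}$ from the hyperbola, the true decay is strictly faster, so your rates are valid as one-sided bounds $u\lesssim r^{-N/(q+1)}$, $v\lesssim r^{-N/(p+1)}$ and the argument still closes --- but you should replace $\sim$ by $\lesssim$. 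On balance, the paper's route is softer (pure embedding theorems, no quantitative decay), while yours trades the need for two first-order Sobolev embeddings and the sharp H\"older pairing for an appeal to \cite{HuvdV} plus an extra logarithmic weight; both are legitimate, but yours carries a heavier external dependence.
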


There are several possible proofs of this lemma. One possibility would be a detailed ODE analysis giving the precise asymptotics of $\psi$, $\phi$ and their derivatives at infinity. We have chosen a softer approach, based only on the finite energy assumption, together with Sobolev embedding theorems. More precisely, we shall use the inequalities
\begin{align}
\label{eq:sob1}
\| f \|_{\mathcal E_\ell^\frac{p+1}{p}} & \gtrsim \left( \int_0^\infty |f|^{q+1} \,r^{N-1}\,dr \right)^\frac{1}{q+1} \,, \\
\label{eq:hardy}
\| f \|_{\mathcal E_\ell^\frac{q+1}{q}} & \gtrsim
\left( \int_0^\infty r^{-\frac{q+1}{q}} |f'|^\frac{q+1}{q} r^{N-1}\,dr \right)^\frac{q}{q+1} \,, \\
\label{eq:sob2}
\| f \|_{\mathcal E_\ell^\frac{p+1}{p}} & \gtrsim \left( \int_0^\infty |f'|^t \,r^{N-1}\,dr \right)^\frac{1}{t} \,, \qquad \frac{1}{t} = \frac{p}{p+1}-\frac{1}{N} \,, \\
\label{eq:sob3}
\| f \|_{\mathcal E_\ell^\frac{q+1}{q}} & \gtrsim \left( \int_0^\infty |f'|^s \,r^{N-1}\,dr \right)^\frac{1}{s} \,, \qquad \frac{1}{s} = \frac{q}{q+1}-\frac{1}{N} \,.
\end{align}

Inequality \eqref{eq:sob1} follows from the Sobolev inequality $\|\Delta F\|_{L^\frac{p+1}{p}(\R^N)} \gtrsim \|F\|_{L^{q+1}(\R^N)}$, applied to $F(x)=f(|x|)Y_{\ell,m}(x/|x|)$. Similarly, inequalities \eqref{eq:hardy}, \eqref{eq:sob2} and \eqref{eq:sob3} follow from Hardy and Sobolev inequalities, bounding $|\nabla F|\geq |f'| |Y_{\ell,m}|$. Note that Sobolev's inequality is applicable since $\frac{q}{q+1} > \frac1N$ and $\frac{p}{p+1} > \frac1N$. Indeed, the latter are equivalent to $\frac{1}{q+1}<\frac{N-1}{N}$ and $\frac{1}{p+1}<\frac{N-1}{N}$, and these inequalities are valid since the scaling relation \eqref{eq-hyp} implies that $\frac{1}{q+1}<\frac{N-2}{N}$ and $\frac{1}{p+1}<\frac{N-2}{N}$.

\begin{proof}
We show that
$$
\int_0^\infty \left( |I_1(r)| + |I_2(r)| \right)dr <\infty \,,
$$
which clearly implies the assertion. We prove this only for $I_1$, the argument $I_2$ being similar.

Using the equation for $v$ we write
$$
I_1(r) = r^{N-1} \left(-u(r)^q \psi(r) - \frac{N-1}{r}v'(r)\psi(r) - v'(r)\psi'(r) \right)
$$
and show that all three terms on the right side are separately integrable. By \eqref{eq:sob1} with $f=u,\psi$, we have $u\in L^{q+1}(\R_+,r^{N-1}\,dr)$ and $\psi\in L^{q+1}(\R_+,r^{N-1}\,dr)$ and therefore $u^q\psi\in L^1(\R_+,r^{N-1}\,dr)$, which means $r^{N-1} u^q\psi\in L^1(\R_+)$. Moreover, $\Delta V = -U^q \in L^{\frac{q+1}q}(\R^N)$ and therefore, by \eqref{eq:hardy} with $f= v$, we have $r^{-1} v' \in L^\frac{q+1}{q}(\R_+,r^{N-1}\,dr)$. Together with $\psi\in L^{q+1}(\R_+,r^{N-1}\,dr)$ this implies $r^{N-2}v'\psi\in L^1(\R_+)$. Finally, \eqref{eq:sob2} with $f=\psi$ and \eqref{eq:sob3} with $f=v$ imply that $\psi'\in L^t(\R_+,r^{N-1}\,dr)$ and $v'\in L^s(\R_+,r^{N-1}\,dr)$, where $\frac1s = \frac{q}{q+1} - \frac1N$ and $\frac1t = \frac{p}{p+1} - \frac1N$.  Moreover, \eqref{eq-hyp} implies that $\frac{1}{s} + \frac{1}{t}=1$ and therefore $v'\psi'\in L^1(\R_+,r^{N-1}\,dr)$, which means $r^{N-1}v'\psi'\in L^1(\R_+)$. This completes the proof.
\end{proof}

As we mentioned before, the basic idea for treating the case $\ell\geq 2$ comes from \cite{LuWe}. However, we do not see where the case $r_1=r_2=\infty$ is handled in that paper.


\section{Proof of Lemma \ref{decene}}

\subsection{Scheme of the proof}

After interchanging the roles of $p$ and $q$ if necessary, we may and will assume that $q \geq p$. Thus, $\frac{2}{N-2} < p \le \frac{N+2}{N-2} \leq q$.

We will prove Lemma \ref{decene} by a repeated application of the maximum principle, using the asymptotic behavior of the ground state, which we quote from \cite[Theorem 2]{HuvdV}.

\begin{lemma}
For each $\frac{2}{N-2} < p \le \frac{N+2}{N-2}$, there are positive constants $a_p$ and $b_p$ such that
\begin{equation}\label{eq_dec}
\lim_{r \to \infty} r^{N-2}\, v(r) = b_p
\quad \hbox{and} \quad
\begin{cases}
\lim\limits_{r \to \infty} r^{p(N-2)-2}\, u(r) = a_p &\text{if } \frac{2}{N-2} < p < \frac{N}{N-2}\,,\\
\lim\limits_{r \to \infty} \dfrac{r^{N-2}}{\log r}\, u(r) = a_p &\text{if } p = \frac{N}{N-2}\,,\\
\lim\limits_{r \to \infty} r^{N-2}\, u(r) = a_p &\text{if } \frac{N}{N-2} < p \le \frac{N+2}{N-2} \,.
\end{cases}
\end{equation}
\end{lemma}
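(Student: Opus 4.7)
The plan is to reduce the asymptotic analysis at infinity to a local regularity and asymptotic analysis at the origin through the Kelvin transform. I would define
\[
\tilde u(x) := |x|^{2-N}\, U(x/|x|^2), \qquad \tilde v(x) := |x|^{2-N}\, V(x/|x|^2),
\]
on $\R^N \setminus \{0\}$. A direct calculation gives
\[
-\Delta \tilde u = |x|^{\alpha}\,\tilde v^p, \qquad -\Delta \tilde v = |x|^{\beta}\,\tilde u^q \qquad \text{in } \R^N \setminus \{0\},
\]
with $\alpha := (N-2)p-(N+2) \leq 0$ and $\beta := (N-2)q-(N+2) \geq 0$. Since $u(r) = r^{2-N}\tilde u(1/r)$ and $v(r) = r^{2-N}\tilde v(1/r)$, the decay in \eqref{eq_dec} translates into the following statements near $0$: that $\tilde v$ extends continuously to $0$ with $\tilde v(0) = b_p > 0$, and that $\tilde u$ is either continuous at $0$ with $\tilde u(0) = a_p$, or $\tilde u(x) \sim a_p\log(1/|x|)$, or $\tilde u(x) \sim a_p|x|^{(N-2)p - N}$, in the three respective cases.

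The first step would establish the regularity of $\tilde v$ at $0$. The change-of-variables identities
\[
\int \tilde u^{q+1}\,|x|^\beta\,dx = \|U\|_{L^{q+1}}^{q+1}, \qquad \int \tilde v^{p+1}\,|x|^\alpha\,dx = \|V\|_{L^{p+1}}^{p+1}
\]
express in Kelvin variables the integrability of $(U,V)$ coming from $\dot W^{2,(p+1)/p} \times \dot W^{2,(q+1)/q}$. Because $\beta \ge 0$, the right-hand side $|x|^\beta\tilde u^q$ in the $\tilde v$-equation is non-singular at $0$ and, by the first identity, lies locally in $L^{(q+1)/q}$ near $0$. Elliptic $W^{2,r}$ regularity combined with a bootstrap scheme (its subcriticality coming from the exponent relation on the critical hyperbola) yields $\tilde v \in L^\infty_{\mathrm{loc}}$ in a neighborhood of $0$, and Schauder estimates give continuity. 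The positivity $\tilde v(0) = b_p > 0$ follows from the strong maximum principle applied to $\tilde v$ in a punctured ball around $0$ together with the strict positivity of $V$.

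The second step analyzes $\tilde u$ via $-\Delta \tilde u = |x|^\alpha \tilde v^p$, whose right-hand side is asymptotic to $b_p^p|x|^\alpha$ as $x \to 0$. The three cases of \eqref{eq_dec} correspond exactly to $\alpha > -2$, $\alpha = -2$, and $\alpha < -2$, that is, $p > N/(N-2)$, $p = N/(N-2)$, and $p < N/(N-2)$. When $\alpha > -2$, the right-hand side lies in $L^r_{\mathrm{loc}}$ for some $r > N/2$, so elliptic regularity gives $\tilde u$ continuous at $0$ with $a_p := \tilde u(0)$. When $\alpha = -2$, the Newton potential of $b_p^p|x|^{-2}$ on $\R^N$ has an explicit logarithmic leading behavior, giving $\tilde u(x) \sim a_p \log(1/|x|)$. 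When $\alpha < -2$, the explicit particular solution of $-\Delta w = b_p^p|x|^\alpha$ is $w(x) = b_p^p\,|x|^{\alpha+2}/[(\alpha+2)(N-\alpha-2)]$, which is the dominant part of $\tilde u$ near $0$ and yields $\tilde u(x) \sim a_p |x|^{(N-2)p - N}$. In each case, converting back through $u(r) = r^{2-N}\tilde u(1/r)$ produces exactly the asymptotic relations claimed in \eqref{eq_dec}.

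The principal technical obstacle is to rigorously isolate, in the borderline case $\alpha = -2$ and the subcritical case $\alpha < -2$, the leading singular or logarithmic part of $\tilde u$ from bounded harmonic corrections. I would handle this by splitting $\tilde u = w + h$, where $w$ is the explicit model particular solution above and $h$ satisfies an equation whose right-hand side is either bounded or of strictly milder singularity; a priori bounds on $h$ obtained from the weighted integrability of $\tilde u$ together with the continuity of $\tilde v$ established in Step 1 would show $h$ is of strictly lower order than $w$ near $0$, so $w$ dictates the leading asymptotics. Strict positivity of the constants $a_p$ and $b_p$ then follows from the strong maximum principle applied to $\tilde u$ and $\tilde v$ in punctured neighborhoods of $0$, together with the strict positivity of the ground state $(U,V)$.
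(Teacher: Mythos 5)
You are not reproving the paper's argument here: the paper does not prove this lemma at all, but quotes it verbatim from Hulshof--van der Vorst \cite[Theorem 2]{HuvdV}, whose proof goes through an Emden--Fowler type analysis of the radial ODE system. Your Kelvin-transform strategy is therefore a genuinely different (and in principle viable) route, and the reduction itself is sound: the exponents $\alpha=(N-2)p-(N+2)\le 0\le\beta=(N-2)q-(N+2)$, the weighted integrability identities, and the dictionary between the behavior of $\tilde u,\tilde v$ at the origin and the three regimes of \eqref{eq_dec} are all correct.

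The genuine gap is the central regularity claim in your Step 1. First, $|x|^\beta\tilde u^q$ is not ``non-singular at $0$'': in two of the three regimes $\tilde u$ itself blows up at the origin, so a priori you only have $\int_{B_1}|x|^\beta\tilde u^{q+1}\,dx<\infty$, which does give $|x|^\beta\tilde u^q\in L^{(q+1)/q}(B_1)$ (after excluding a Dirac mass at $0$, a B\^ocher-type point you skip, though it follows from the weighted integrability). But the decisive assertion --- that $W^{2,r}$ estimates plus a bootstrap whose ``subcriticality comes from the exponent relation on the critical hyperbola'' yield $\tilde v\in L^\infty_{\mathrm{loc}}$ --- is unjustified, and the stated reason is backwards: \eqref{eq-hyp} is precisely the scale-invariant, \emph{critical} relation, and the naive bootstrap loop is exponent-neutral. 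Concretely, starting from $\tilde u\in L^{q+1}_{\mathrm{loc}}$, Calder\'on--Zygmund and Sobolev give $\tilde v\in L^{k}$ with $\tfrac1k=\tfrac{q}{q+1}-\tfrac2N$, and feeding this into the $\tilde u$-equation (H\"older against $|x|^{\alpha}$) returns $\tilde u\in L^{s}$ with $\tfrac1s=1-\tfrac{p}{q+1}\ge\tfrac1{q+1}$, with equality in the scalar case $p=q=\tfrac{N+2}{N-2}$: no gain, exactly the familiar obstruction for the critical scalar equation, where boundedness of the Kelvin transform requires a Brezis--Kato/Moser-type argument exploiting smallness of the critical norm on small balls rather than a plain bootstrap. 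For the system such an argument must additionally cope with the negative weight $|x|^{\alpha}$, the asymmetric Lebesgue exponents and possibly $p<1$; alternatively one can use radial monotonicity to get initial pointwise decay and iterate in the integral form of the ODEs, which is essentially what \cite{HuvdV} does. Until this step is supplied, your Step 2 (which presupposes continuity and positivity of $\tilde v$ at $0$) cannot start. Two smaller points: the model solution in the regime $\alpha<-2$ is $w=\frac{b_p^p}{-(\alpha+2)(\alpha+N)}\,|x|^{\alpha+2}=\frac{b_p^p}{((N-2)p-2)\,(N-(N-2)p)}\,|x|^{(N-2)p-N}$, so your denominator $(\alpha+2)(N-\alpha-2)$ has the wrong sign and size; and in the borderline case $\alpha=-2$, extracting the exact constant $a_p$ requires a quantitative modulus of continuity of $\tilde v$ at $0$, not merely continuity.
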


\subsection{The case $\frac{2}{N-2} < p < \frac{N}{N-2}$}
We begin with an elementary algebraic lemma.
\begin{lemma}
If $\frac{2}{N-2} < p < \frac{N}{N-2}$, then
\begin{equation}\label{eq_ineq}
(p(N-2)-2)(q-1) > 4-(N-2)(p-1) > 2.
\end{equation}
\end{lemma}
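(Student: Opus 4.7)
The plan is to use the critical hyperbola relation \eqref{eq-hyp} to write $q-1$ explicitly as a rational function of $p$, so that both inequalities reduce to elementary linear statements in $p$.

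First, I would solve \eqref{eq-hyp} for $q+1$. From $\frac{1}{q+1} = \frac{N-2}{N} - \frac{1}{p+1} = \frac{(N-2)p-2}{N(p+1)}$, and noting that $(N-2)p - 2 > 0$ precisely because of the lower hypothesis $p > \frac{2}{N-2}$, I obtain $q+1 = \frac{N(p+1)}{(N-2)p-2}$. A short direct computation then gives
$$q-1 = \frac{N(p+1) - ((N-2)p-2)}{(N-2)p-2} = \frac{N+4-(N-4)p}{(N-2)p-2}.$$

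The crucial observation is that the factor $p(N-2)-2$ appearing on the left-hand side of \eqref{eq_ineq} cancels exactly with the denominator just produced, so that
$$(p(N-2)-2)(q-1) = N+4-(N-4)p.$$
This is the whole point of the manipulation, and it turns each of the two inequalities in \eqref{eq_ineq} into something linear in $p$.

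For the first inequality, I would check that $N+4-(N-4)p > 4-(N-2)(p-1)$ simplifies, after cancelling the common $N$ and expanding $(N-2)(p-1) = (N-2)p - (N-2)$, to $2 > -2p$, that is $p > -1$, which is trivially true. For the second inequality, $4-(N-2)(p-1) > 2$ is equivalent to $(N-2)(p-1) < 2$, i.e.\ $p < 1 + \frac{2}{N-2} = \frac{N}{N-2}$, which is exactly the upper bound on $p$ assumed in the hypothesis.

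I do not anticipate any genuine obstacle: the proof is a purely algebraic reduction, and the only non-routine observation is the cancellation between the prefactor $p(N-2)-2$ on the left of \eqref{eq_ineq} and the denominator of $q-1$ coming from the critical hyperbola. The sign condition $(N-2)p-2>0$, used silently when clearing denominators, comes for free from the hypothesis $p > \frac{2}{N-2}$.
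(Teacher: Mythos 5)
Your proof is correct, and it takes a somewhat different algebraic route from the paper's. The paper introduces the auxiliary quantity $A=(p+1)(N-2)\in(N,2(N-1))$ and shows that the first inequality is equivalent to $\frac{A-N}{A}>\frac{1}{q+1}$, which it then reduces to $1>\frac{N-2}{N}$ using the hyperbola relation; you instead solve the hyperbola explicitly for $q-1$ and exploit the exact cancellation $(p(N-2)-2)(q-1)=N+4-(N-4)p$, turning the first inequality into the trivial $p>-1$. Both are elementary two-line manipulations of \eqref{eq-hyp}; yours is arguably more transparent because the cancellation makes the role of the hyperbola visible at a glance. One further point worth flagging: for the second inequality $4-(N-2)(p-1)>2$, your derivation correctly identifies that it is equivalent to $p<\frac{N}{N-2}$, i.e.\ it follows from the \emph{upper} bound in the hypothesis, whereas the paper attributes it to the lower bound $p>\frac{2}{N-2}$; that attribution in the paper appears to be a slip, and your version is the accurate one.
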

\begin{proof}
Set $A = (p+1)(N-2) \in (N, 2(N-1))$. Note that
\begin{align*}
(N-2)(p-1) + (p(N-2)-2)(q-1) > 4 &\Leftrightarrow (A-N)(q+1) > A \\
&\Leftrightarrow \frac{A-N}{A} > \frac{1}{q+1} = (N-2)\left(\frac{A-N}{AN}\right) \\
&\Leftrightarrow 1 > \frac{N-2}{N}.
\end{align*}
Clearly, the last inequality holds for all $N \ge 3$. Thus the first inequality in \eqref{eq_ineq} is true.
The second inequality in \eqref{eq_ineq} is a direct consequence of the condition that $p > \frac{2}{N-2}$.
\end{proof}

We are now in position to prove Lemma \ref{decene} for $\frac{2}{N-2} < p < \frac{N}{N-2}$.

\begin{proof}[Proof of Lemma \ref{decene} for $\frac{2}{N-2} < p < \frac{N}{N-2}$]
The proof is divided into 3 steps.

\medskip \noindent
\textsc{Step 1.} We assert that for any pair $(\alpha, \nu)$ such that
\begin{equation}\label{eq_proof_12}
\alpha \ge 0 \quad \text{and} \quad 0 < \nu < \min\{N-2, (p(N-2)-2)(q-1)-2+\alpha\}
\end{equation}
one has
\begin{equation}\label{eq_proof_11}
|\Psi(x)| \le \frac{C}{|x|^{\alpha}} \quad \text{on } \{|x| \ge 1\}\quad \Rightarrow\quad |\Phi(x)| \le \frac{C'}{|x|^{\nu}} \quad \text{on } \{|x| \ge 1\} \,.
\end{equation}
Observe that the minimum in \eqref{eq_proof_12} is positive by virtue of \eqref{eq_ineq}.

Consider
\[G_{\nu}(x) = \Phi(x) - \frac{m_{\nu}}{|x|^{\nu}}
\quad \text{on } \{|x| \ge 1\}\]
where $m_{\nu} > 0$ is a number to be determined. If $m_\nu\geq \sup_{\{|x|=1\}}\Phi(x)$, then
\[G_{\nu}(x) \le 0 \quad \text{on } \{|x| = 1\}\]
Moreover, \eqref{eq_proof_12}, the first inequality in \eqref{eq_proof_11} and \eqref{eq_dec} show that
\begin{align*}
-\Delta G_{\nu}(x) &= q\, U^{q-1} \Psi - \frac{m_{\nu} \nu (N-2-\nu)}{|x|^{\nu+2}} \\
&\le \frac{C''}{|x|^{(p(N-2)-2)(q-1)+\alpha}} - \frac{m_{\nu} \nu (N-2-\nu)}{|x|^{\nu+2}} \le 0 \quad \text{in } \{|x| >1\}
\end{align*}
provided $m_{\nu} \geq C''/(\nu(N-2-\nu))$. The maximum principle yields that for any number $R > 1$,
\[G_{\nu}(x) \le \max_{\{|x| = R\}} (G_{\nu}(x))_+ \quad \text{on } \{1 \le |x| \le R\}.\]
Taking $R \to \infty$ and using the uniform decay assumption on $\Phi$, we deduce
\[G_{\nu}(x) \le 0, \quad \text{i.e.,} \quad \Phi(x) \le \frac{m_{\nu}}{|x|^{\nu}} \quad \text{on } \{|x| \ge 1\}.\]
By the same reasoning, we obtain a similar upper bound on $-\Phi$ in $\{|x| \ge 1\}$. This proves the assertion \eqref{eq_proof_11}.

\medskip
An analogous argument shows that for any pair $(\beta,\mu)$ such that
\begin{equation}\label{eq_proof_22}
\beta \ge 0 \quad \text{and} \quad 0 < \mu < \min\{N-2, (N-2)(p-1)-2+\beta\}
\end{equation}
one has
\begin{equation}\label{eq_proof_21}
|\Phi(x)| \le \frac{C}{|x|^{\beta}} \quad \text{on } \{|x| \ge 1\} \quad \Rightarrow \quad |\Psi(x)| \le \frac{C'}{|x|^{\mu}} \quad \text{on } \{|x| \ge 1\} \,.
\end{equation}
Unlike \eqref{eq_proof_12}, the minimum in \eqref{eq_proof_22} may be non-positive unless $\beta$ is large enough.

\medskip \noindent
\textsc{Step 2.} We assert that for any $\eta > 0$ there is a $C>0$ such that
\begin{equation}\label{eq_opt_dec}
|\Psi(x)| \le \frac{C}{1+|x|^{(N-2)p-2-\eta}}
\quad \text{and} \quad
|\Phi(x)| \le \frac{C}{1+|x|^{N-2-\eta}} \quad \text{in } \R^N.
\end{equation}

The uniform decay condition tells us that $|\Psi(x)| \le C$ in $\R^N$.
Hence, taking $\alpha =\alpha_1 = 0$ in \eqref{eq_proof_11}, we find
\[|\Phi(x)| \le \frac{C'}{|x|^{\beta_1}} \quad \text{on } \{|x| \ge 1\}\]
for any fixed $0 < \beta_1 < \min\{N-2, (p(N-2)-2)(q-1)-2\}$.
Next, taking $\beta = \beta_1$ in \eqref{eq_proof_21}, we also find
\[|\Psi(x)| \le \frac{C''}{|x|^{\alpha_2}} \quad \text{on } \{|x| \ge 1\}\]
for any fixed $0 < \alpha_2 < \min\{N-2, (N-2)(p-1) + (p(N-2)-2)(q-1)-4\}$. (By \eqref{eq_ineq}, the minimum is positive and therefore such $\alpha_2$ does exist.)
Taking $\alpha = \alpha_2$, we again employ \eqref{eq_proof_11} to update the range of $\beta$.
In this way, we can construct a (finite) sequence $\{(\alpha_n, \beta_n)\}_{n \in \N}$ such that
\begin{itemize}
\item[-] Each of $\{\alpha_n\}$ and $\{\beta_n\}$ is increasing;
\item[-] For each $n$, it holds that $\beta_n > \alpha_{n+1}$;
\item[-] $\beta_n \nearrow N-2$ as $n$ gets large. In particular, $\alpha_{n+1} \nearrow (N-2)p-2$.
\end{itemize}
By picking $(\alpha_n, \beta_n)$ such that $\alpha_n \ge (N-2)p-2-\eta$ and $\beta_n \ge (N-2)-\eta$, we conclude that \eqref{eq_opt_dec} is true.

\medskip \noindent
\textsc{Step 3.} We conclude the proof. By \eqref{eq_dec} and \eqref{eq_opt_dec},
\[\|\Delta \Psi\|_{L^{\frac{p+1}p}(\R^N)} = p \|V^{p-1} \Phi\|_{L^{\frac{p+1}p}(\R^N)}
\le C \left(\int_{\R^N} \frac{dx}{1+|x|^{(N-2)(p+1)-\eta}}\right)^{\frac p{p+1}}\]
where $\eta > 0$ can be taken arbitrarily small.
Hence we infer from the relation $(N-2)(p+1) > N$ that the above integral is finite.
This confirms that $\Psi \in \dot{W}^{2,{\frac{p+1}p}}(\R^N)$. This in turn yields that $\Phi \in \dot W^{2,\frac{q+1}{q}}(\R^N)$.
\end{proof}

\subsection{The case $\frac{N}{N-2} \le p \le \frac{N+2}{N-2}$}

In this subsection, we slightly modify the argument in the previous subsection to cover the remaining case.

\begin{proof}[Proof of Lemma \ref{decene} for $\frac{N}{N-2} \le p \le \frac{N+2}{N-2}$]
As before, the proof is divided into 3 steps.

\medskip \noindent
\textsc{Step 1.} The claim that \eqref{eq_proof_22}-\eqref{eq_proof_21} continues to hold.
On the other hand, the comparison argument and \eqref{eq_dec} now imply that \eqref{eq_proof_11} holds for any pair $(\alpha, \nu)$ such that
\begin{equation}\label{eq_proof_32}
\alpha \ge 0 \quad \text{and} \quad 0 < \nu < \min\{N-2, (N-2)(q-1)-2+\alpha\}.
\end{equation}
The minimum in \eqref{eq_proof_22} is always non-negative (and positive if $p > \frac{N}{N-2}$), and that of \eqref{eq_proof_32} is always positive.

\medskip \noindent
\textsc{Step 2.} The behavior of the parameters $\alpha, \beta, \mu, \nu$ differs from the one in the previous subsection.
Because of this reason, in this time, for any $\eta>0$ there is a $C>0$ such that
\begin{equation}\label{eq_proof_31}
|\Psi(x)| \le \frac{C}{1+|x|^{N-2-\eta}}
\quad \text{and} \quad
|\Phi(x)| \le \frac{C}{1+|x|^{N-2-\eta}} \quad \text{in } \R^N.
\end{equation}
In fact, the iteration process produces a (finite) sequence $\{(\alpha_n, \beta_n)\}_{n \in \N}$ such that
\begin{itemize}
\item[-] Each of $\{\alpha_n\}$ and $\{\beta_n\}$ is increasing;
\item[-] For each $n$, it holds that $\beta_n \le \alpha_{n+1}$;
\item[-] $\alpha_{n+1}, \beta_n \nearrow N-2$ as $n$ gets large.
\end{itemize}

\medskip \noindent
\textsc{Step 3.} Having \eqref{eq_proof_31} in hand, one can conclude the proof of Lemma \ref{decene} by the same reasoning as before.
\end{proof}


\bibliographystyle{amsalpha}

\end{document}